\documentclass[12pt]{amsart}

\usepackage{amsmath, graphicx, cite, enumerate, comment}
\usepackage{amssymb, amsthm, amscd}
\usepackage{latexsym}
\usepackage{amssymb}
\usepackage[english]{babel}
\usepackage{amsmath,amssymb}
\usepackage{amsfonts}
\usepackage{amscd}
\usepackage{cite}
\usepackage{marvosym}
\usepackage{mathrsfs}
\usepackage{amsthm}
\usepackage{hyperref}
\usepackage{verbatim}

\usepackage{tikz} \usetikzlibrary{arrows,shapes,patterns} 
\usepackage{lipsum} 

\headheight=6.15pt
\textheight=8.75in
\textwidth=6.5in
\oddsidemargin=0in
\evensidemargin=0in
\topmargin=0in

\newtheorem*{thmA}{Theorem A}
\newtheorem*{thmB}{Corollary B}
\newtheorem*{thmC}{Theorem C}
\newtheorem*{thmD}{Remark D}
\newtheorem*{thmE}{Corollary E}
\newtheorem*{thmF}{Theorem F}
\newtheorem*{thmG}{Remark G}

\newtheorem{thm}{Theorem}
\newtheorem{lemm}[thm]{Lemma}

\newtheorem{prop}[thm]{Proposition}

\theoremstyle{remark}
\newtheorem{rmk}[thm]{Remark}

\theoremstyle{definition}

\numberwithin{equation}{section}

\newcommand{\R}{\mathbb{R}}

\newcommand{\be}{\begin{equation}}
\newcommand{\ee}{\end{equation}}
\newcommand{\bd}{\begin{displaymath}}
\newcommand{\ed}{\end{displaymath}}

     \title[Index estimates for free boundary minimal hypersurfaces]{Index estimates for free boundary minimal hypersurfaces}
     \author{Lucas Ambrozio, Alessandro Carlotto and Ben Sharp}
     \address{ \noindent L. Ambrozio: Imperial College London, South Kensington Campus, London SW7 2AZ, United Kingdom, \textit{E-mail address: l.ambrozio@imperial.ac.uk} \newline \newline \indent A. Carlotto: ETH Inst. f\"ur Theoretische Studien, Clausiusstrasse 47, 8092 Z\"urich, Switzerland \textit{E-mail address: alessandro.carlotto@eth-its.ethz.ch} \newline \newline \indent B. Sharp: SNS Pisa, Piazza dei Cavalieri 7, 56126 Pisa, Italy, \textit{E-mail address: benjamin.sharp@sns.it} }
     
     \begin{document}
     	
     	\begin{abstract}
     		We show that the Morse index of a properly embedded free boundary minimal hypersurface in a strictly mean convex domain of the Euclidean space grows linearly with the dimension of its first relative homology group (which is at least as big as the number of its boundary components, minus one). In ambient dimension three, this implies a lower bound for the index of a free boundary minimal surface which is linear both with respect to the genus and the number of boundary components. Thereby, the compactness theorem by Fraser and Li implies a strong compactness theorem for the space of free boundary minimal surfaces with uniformly bounded Morse index inside a convex domain. Our estimates also imply that the examples constructed, in the unit ball, by Fraser-Schoen and Folha-Pacard-Zolotareva have arbitrarily large index. Extensions of our results to more general settings (including various classes of positively curved Riemannian manifolds and other convexity assumptions) are discussed.
     	\end{abstract}
     	
     	\maketitle       
							
	\section{Introduction} 
		
		\indent Given $(\Omega^{n+1},g)$ a smooth Riemannian manifold with boundary, we shall be concerned here with certain global properties of free boundary minimal hypersurfaces $M^n\subset \Omega^{n+1}$, namely hypersurfaces that are critical points of the area functional when the boundary $\partial M$ is not fixed (like in Plateau's problem) but subject to the sole constraint $\partial M\subset\partial \Omega$. 
		Due to their self-evident geometric interest (which can be traced back at least to Courant \cite{Cou}), these variational objects have been widely studied and a number of existence results have been obtained via surprisingly diverse methods (see, among others, 
		\cite{MeeYau, Str, GruJos1, Jos2, Li, MaxNunSmi , FraSch3, FolPacZol, FreGulMcg, Ye} and references therein). Free boundary minimal hypersurfaces also naturally arise in partitioning problems for convex bodies, in capillarity problems for fluids and, as has significantly emerged in recent years, in connection to extremal metrics for Steklov eigenvalues for manifolds with boundary (see primarily the works by Fraser-Schoen \cite{FraSch1, FraSch2, FraSch3} and references therein). From an analytic perspective, it should also be mentioned that their boundary regularity has been the object of extensive investigations (let us mention, for instance, \cite{Jos1, GruJos2, HilNit1, HilNit2}). \\
		\indent The results we are about to present regard  the comparison between the Morse index and the topology of free boundary minimal hypersurfaces. Roughly speaking, the index is a non-negative integer measuring the maximal number of distinct deformations that locally decrease the area to second-order (subject to the aforementioned constraint $\partial M\subset \partial\Omega$). On the other hand, we shall describe the topology of a manifold with boundary by means of its (real) homology groups. As is well-known, in the most basic case of orientable surfaces with boundary the topological type can be completely described by means of two numbers, namely the genus and the number of boundary components of the surface in question. \\
		\indent There are some general results about the geometry and topology of stable (= index zero) and index one compact free boundary minimal surfaces in general three-manifolds whose boundary satisfies some convexity assumption. For example, it is known that stable compact two-sided free boundary minimal surfaces in mean convex domains of three-manifolds with non-negative scalar curvature must be topological disks or totally geodesic annuli (see for example \cite{CheFraPan}). Moreover, Cheng, Fraser and Pang showed in the same article that there exists an explicit upper bound on the genus and the number of boundary components of index one compact two-sided free boundary minimal surfaces in such manifolds. Related results about the topology of free boundary volume-preserving stable CMC surfaces in strictly mean convex domains of the three-dimensional Euclidean space were obtained by Ros in \cite{Ros2}.\\
		\indent On the other hand, Fraser and Schoen \cite{FraSch3} have proven that if $M^n\subset B^{n+1}$ (the unit ball in $\mathbb{R}^{n+1}$) then either $M^n$ is a flat disk (whose index is one) or its Morse index is at least $n+2$. We also remark that some interesting results on the index of free boundary minimal submanifolds of higher codimension have been proven in \cite{Fra} and \cite{FraSch3}, Theorem 3.1. In this paper, we apply the techniques developed in \cite{AmbCarSha} (but see also \cite{Ros2}) to prove a general lower bound for the index of free boundary minimal hypersurfaces in terms of topological data of the hypersurface in question.

		For the sake of simplicity,  in this introduction we shall state our results in the special case of domains of the Euclidean space, while the corresponding extensions to Riemannian manifolds satisfying certian curvature conditions are postponed to the last section of this paper (see Theorem \ref{thm:genamb}, Theorem \ref{thm:combgen} and related comments). \\
			\indent Our first main result is the following.
		
		\begin{thmA}
		Let $\Omega^{n+1}$ be a strictly mean convex domain of the $(n+1)$-dimensional Euclidean space, $n\geq 2$. Let $M^n$ be a compact, orientable, properly embedded free boundary minimal hypersurface in $\Omega$. Then
		\begin{equation*}
		index(M) \geq \frac{2}{n(n+1)} dim H_{1}(M,\partial M;\mathbb{R}).
		\end{equation*}				 		 		
		\end{thmA}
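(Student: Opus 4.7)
The plan is to adapt the method of \cite{AmbCarSha}, originally for closed minimal hypersurfaces, to the free boundary setting, combining Hodge theory on manifolds with boundary with a Ros--Savo type test-function construction and an ensuing linear-algebra lemma. First, by Hodge theory for manifolds with boundary, I would identify $b := \dim H_1(M,\partial M;\mathbb{R})$ with the dimension of the space $\mathcal{H}$ of smooth 1-forms $\omega$ on $M$ that are closed, co-closed, and satisfy the relative (Dirichlet-type) boundary condition $i^{\ast}\omega = 0$ on $\partial M$. This is the natural substitute in the free boundary setting for the space of absolute harmonic forms used in the closed case.

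Given $\omega\in\mathcal{H}$ and a translation vector $V \in \mathbb{R}^{n+1}$ (viewed as a parallel vector field on the ambient), define the test function $u_{\omega,V} := \omega(V^{\top}) = \langle \omega^{\sharp},V\rangle$ on $M$, where $V^{\top}$ denotes the tangential projection of $V$. As $V$ ranges over an orthonormal basis $e_1,\ldots,e_{n+1}$ of $\mathbb{R}^{n+1}$, these assemble into a linear map $\Phi \colon \mathcal{H}\otimes \mathbb{R}^{n+1} \to C^{\infty}(M)$. The central step is the computation of the trace $\sum_{i=1}^{n+1} Q(u_{\omega,e_i},u_{\omega,e_i})$, using (i) the Gauss equation for the minimal hypersurface $M$ in Euclidean ambient (so $\operatorname{Ric}_M(\omega^{\sharp},\omega^{\sharp}) = -|A\omega^{\sharp}|^2$); (ii) the Bochner--Weitzenb\"ock formula for the harmonic 1-form $\omega$; and (iii) the free boundary orthogonality, which identifies the outward conormal $\nu$ of $\partial M$ in $M$ with the outward unit normal $n_{\partial\Omega}$ of $\partial \Omega$. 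After cancellations, the boundary contributions reduce to $-\int_{\partial M}\bigl(H_{\partial M \subset M}+h^{\partial\Omega}(N,N)\bigr)\omega(\nu)^{2}\,d\sigma = -\int_{\partial M} H^{\partial\Omega}\omega(\nu)^{2}\,d\sigma$ via the splitting $T\partial\Omega = T\partial M \oplus \mathbb{R}\cdot N$, which under strict mean convexity is strictly negative whenever $\omega$ does not vanish on $\partial M$.

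I would then encode these trace estimates as a symmetric bilinear form $\widetilde{Q}$ on the $(n+1)b$-dimensional vector space $\mathcal{H}\otimes \mathbb{R}^{n+1}$, defined by $\widetilde{Q}(\omega\otimes V,\omega'\otimes V') := Q(u_{\omega,V},u_{\omega',V'})$. Since $\widetilde{Q}$ is the pullback of $Q$ via $\Phi$, its negative Morse index is at most $\operatorname{index}(M)$; combined with the trace inequality from the preceding step, the linear-algebra lemma of \cite{AmbCarSha} forces a matching lower bound on the negative index, yielding the claimed estimate $\operatorname{index}(M) \geq \tfrac{2}{n(n+1)}b$. The coefficient $\tfrac{2}{n(n+1)} = \binom{n+1}{2}^{-1}$ arises naturally from the $(n+1)$-parameter family of test functions per harmonic form together with the dimensional counting in the lemma.

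The main obstacle is the analysis of the boundary and bulk contributions in the trace computation. On the boundary, the Bochner identity for $\omega$ with relative boundary conditions produces a term $\int_{\partial M}\langle\nabla_\nu\omega,\omega\rangle$ which must be rewritten, using the co-closedness $d^{\ast}\omega = 0$, as an expression involving $H_{\partial M\subset M}$; only then do the various boundary contributions combine to give the crucial $-\int_{\partial M} H^{\partial\Omega}\omega(\nu)^{2}$ that harnesses the strict mean convexity hypothesis (rather than the stronger pointwise convexity of $\partial\Omega$). In the bulk, a residual term $\int_M (2|A\omega^{\sharp}|^2 - |A|^2|\omega|^2)$ arises which has no pointwise sign in dimensions $n\geq 3$ (it cancels identically only when $n=2$, where minimality forces $|A\omega^{\sharp}|^2 = \tfrac{1}{2}|A|^2|\omega|^2$); one expects to handle this by a further integrated estimate exploiting once more the harmonicity of $\omega$, or by a suitable modification of the test function adjusting $u_{\omega,V}$ by a term proportional to $\langle V,N\rangle$ designed to neutralize the residue.
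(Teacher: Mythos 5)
Your overall strategy is right: identify $H_1(M,\partial M;\mathbb{R})$ with the space $\mathcal{H}^1_T(M,g)$ of harmonic one-forms normal along $\partial M$, plug Euclidean-coordinate test functions built from $\omega^\sharp$ into the free boundary index form, use the Bochner identity together with the free boundary orthogonality (identifying $\nu$ with the outward normal of $\partial\Omega$ and splitting $T\partial\Omega = T\partial M \oplus \mathbb{R}N$) to collapse the boundary contribution to $-\int_{\partial M}H^{\partial\Omega}|\omega|^2\,d\sigma$, invoke unique continuation for harmonic forms to guarantee $\omega$ does not vanish on all of $\partial M$, and close via a dimension count against the span of the negative eigenfunctions. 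All of that matches the paper's argument.

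The genuine gap is your choice of test functions, which is \emph{not} the one used in \cite{AmbCarSha} nor the one needed here. You take the $(n+1)$ coordinates $u_V = \langle \omega^\sharp, V\rangle$, and you correctly discover that the resulting trace identity leaves a residual bulk term $\int_M\bigl(2|A(\omega^\sharp,\cdot)|^2 - |A|^2|\omega|^2\bigr)\,d\mu$ with no sign in dimensions $n\geq 3$. At that point the argument is stuck: you only speculate that one might ``handle this by a further integrated estimate'' or by adding a term proportional to $\langle V,N\rangle$, but neither modification is carried out, and in fact neither produces the needed cancellation. The resolution in the paper is to replace the vector-valued construction by the bivector-valued one: use the $\binom{n+1}{2}$ coordinates $u_{ij} = \langle N\wedge\omega^\sharp,\ \theta_i\wedge\theta_j\rangle$. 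With this choice the pointwise identity $|D(N\wedge\omega^\sharp)|^2 = |\nabla^M\omega|^2 - |A(\omega^\sharp,\cdot)|^2 + |A|^2|\omega|^2$ makes the $|A|^2|\omega|^2$ terms cancel exactly against the index-form potential, so that by Bochner the trace reduces cleanly to $-\int_{\partial M}H^{\partial\Omega}|\omega|^2\,d\sigma$ with no residue. Relatedly, your closing remark that the constant $\tfrac{2}{n(n+1)} = \binom{n+1}{2}^{-1}$ ``arises from the $(n+1)$-parameter family'' is internally inconsistent: an $(n+1)$-parameter family would give the constant $\tfrac{1}{n+1}$ in the dimension count, not $\binom{n+1}{2}^{-1}$. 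The $\binom{n+1}{2}^{-1}$ is exactly the price paid for passing to the bivector family, which is what buys the cancellation. So the key missing idea is to use $N\wedge\omega^\sharp$ rather than $\omega^\sharp$; with that in hand the rest of your outline goes through.
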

		In the above inequality, $H_{1}(M,\partial M;\mathbb{R})$ denotes the first relative homology group with real coefficients. The dimension of this homology group can be explicitly computed in terms of the homology groups of $M^n$ and $\partial M$ (see Lemma \ref{lem:relhom}). In particular, we obtain an estimate for the index in terms of the number of boundary components. 
		\begin{thmB}
		Let $\Omega^{n+1}$ be a strictly mean convex domain of the $(n+1)$-dimensional Euclidean space, $n\geq 3$. Let $M^n$ be a compact, orientable, properly embedded free boundary minimal hypersurface in $\Omega$ with $r\geq 1$ boundary components. Then
		\begin{equation*}
		 index(M) \geq \frac{2}{n(n+1)} (r-1).
		\end{equation*}				 		 		
		\end{thmB}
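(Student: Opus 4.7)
The plan is to derive Corollary B directly from Theorem A by establishing the purely topological inequality
\[
\dim H_{1}(M,\partial M;\mathbb{R}) \geq r-1,
\]
where $r$ denotes the number of connected components of $\partial M$. Once this is in place, Theorem A applied to $M$ immediately produces the stated bound, so no new geometric analysis is needed.

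To establish the topological inequality I would invoke the long exact sequence of the pair $(M,\partial M)$ with real coefficients,
\[
\cdots \longrightarrow H_1(M;\mathbb{R}) \longrightarrow H_1(M,\partial M;\mathbb{R}) \stackrel{\partial_{*}}{\longrightarrow} H_0(\partial M;\mathbb{R}) \stackrel{i_{*}}{\longrightarrow} H_0(M;\mathbb{R}) \longrightarrow \cdots,
\]
and exploit the fact that $M$ is (tacitly) connected, so $\dim H_0(M;\mathbb{R}) = 1$, while $\dim H_0(\partial M;\mathbb{R}) = r$. The inclusion-induced map $i_{*}$ sends each generator of $H_0(\partial M;\mathbb{R})$ to the unique generator of $H_0(M;\mathbb{R})$; hence $i_{*}$ is surjective and $\dim \ker i_{*} = r-1$. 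By exactness at $H_0(\partial M;\mathbb{R})$, this kernel equals $\operatorname{image}\partial_{*}$, so that $\dim H_1(M,\partial M;\mathbb{R}) \geq r-1$, as required. This is precisely the content that one should expect from Lemma \ref{lem:relhom} referenced in the introduction.

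Combining the two ingredients yields
\[
\mathrm{index}(M) \;\geq\; \frac{2}{n(n+1)}\dim H_1(M,\partial M;\mathbb{R}) \;\geq\; \frac{2}{n(n+1)}(r-1),
\]
which is the inequality claimed in Corollary B.

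There is no substantive obstacle: the corollary is a lossy consequence of Theorem A, and the homological step is entirely routine. The two minor points I would flag are (i) the implicit connectedness of $M$, without which the argument above only produces the weaker bound $\dim H_1(M,\partial M;\mathbb{R}) \geq r-k$, where $k$ is the number of components of $M$, and (ii) the fact that the statement is restricted to $n \geq 3$ even though Theorem A holds for all $n \geq 2$; this presumably reflects the authors' intention to reserve the surface case $n=2$ for a sharper corollary involving also the genus, rather than any genuine limitation of the present argument.
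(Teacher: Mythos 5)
Your argument is correct and follows the same route as the paper: Corollary B is obtained from Theorem A together with the lower bound $\dim H_1(M,\partial M;\mathbb{R}) \geq r-1$, which the paper records (in the sharper form of an exact equality) as Lemma \ref{lem:relhom}, also proved via the long exact sequence of the pair $(M,\partial M)$. Your two flagged points are both apt: the paper does tacitly assume $M$ connected (see Section \ref{sec:indexfb}), and the restriction to $n\geq 3$ is simply because the surface case $n=2$ is upgraded separately in Theorem C.
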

		
		In the case of free boundary minimal surfaces ($n=2$), the estimate also involves the genus of the surface (Lemma \ref{lem:twodimhom}) and can in fact be upgraded to the more general scenario when the ambient domain is only \textsl{weakly} mean convex. This requires an \textsl{ad hoc} argument, and exploits a result of Ros \cite{Ros1}.

		\begin{thmC}
		 Let $\Omega^3$ be a mean convex domain of the three-dimensional Euclidean space. Let $M^2$ be a compact, orientable, properly embedded free boundary minimal surface in $\Omega$ with genus $g$ and $r\geq 1$ boundary components. Then
		\begin{equation*}
		 index(M) \geq \frac{1}{3} (2g+r-1) .
		\end{equation*}	 		
		\end{thmC}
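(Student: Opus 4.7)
The estimate splits according to whether the mean convexity of $\partial\Omega$ is strict or only weak. In the strictly mean convex case, Theorem A applied with $n=2$ gives $\mathrm{index}(M)\geq\frac{1}{3}\dim H_{1}(M,\partial M;\mathbb{R})$, and Lemma \ref{lem:twodimhom} identifies this dimension as $2g+r-1$, proving the claim. The substantive content is the extension to merely weakly mean convex $\partial\Omega$, in which case the boundary contribution to the second variation can fail to be strictly negative on the natural test sections used in the proof of Theorem A.

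To handle the weakly mean convex case I would revisit the boundary term of the Jacobi quadratic form for normal variations $uN$. The free boundary orthogonality makes $\{N,T\}$ an orthonormal frame of $T\partial\Omega$ along $\partial M$, so $\Pi(N,N)=H_{\partial\Omega}-\kappa_{g}$, where $\kappa_{g}$ is the geodesic curvature of $\partial M$ as a curve in $M$. Combined with Gauss--Bonnet and the two-dimensional minimal-surface identity $|A|^{2}=-2K$, this lets me trade the possibly vanishing boundary integral $\int_{\partial M}H_{\partial\Omega}u^{2}$ for a combination of $\int_{M}Ku^{2}$, $\int_{\partial M}\kappa_{g}u^{2}$, and the topological invariants $g,r$. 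Into this rewritten quadratic form I plug the $(2g+r-1)$-dimensional family of test sections arising, as in the proof of Theorem A, from harmonic 1-forms representing classes in $H^{1}(M,\partial M;\mathbb{R})$.

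The final ingredient is Ros's result \cite{Ros1}, whose role is to supply a sharp integral inequality for free boundary minimal surfaces in weakly mean convex Euclidean domains of $\mathbb{R}^{3}$ that forces the modified quadratic form to be strictly negative on a subspace large enough to yield $\mathrm{index}(M)\geq\frac{1}{3}(2g+r-1)$. The main obstacle is precisely this last step: controlling the pointwise interaction along $\partial M$ between $H_{\partial\Omega}$, $\kappa_{g}$, and the harmonic 1-form data, so as to absorb any loss of negativity at zeros of $H_{\partial\Omega}$ without degrading the dimension count. This is the ``\emph{ad hoc}'' two-dimensional argument foreshadowed in the introduction; it leverages Gauss--Bonnet together with the Riemann surface structure of $M$, and does not extend cleanly to higher dimensions.
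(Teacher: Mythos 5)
Your reduction of the strictly mean convex case to Theorem A plus Lemma \ref{lem:twodimhom} is correct, and you rightly identify the weakly mean convex case as the real content. However, your proposed approach to the weak case has a genuine gap, and it also misidentifies the role of the cited result of Ros.

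The paper does not rewrite the quadratic form via Gauss--Bonnet and attempt to force negativity directly on a $(2g+r-1)$-dimensional space; indeed, you yourself concede you cannot close that argument (``the main obstacle is precisely this last step''). Instead the paper argues by contradiction: assuming $\mathrm{index}(M)<\frac{1}{3}(2g+r-1)$, \emph{both} balancing maps $\Phi_N$ on $\mathcal{H}^1_N(M,g)$ and $\Phi_T$ on $\mathcal{H}^1_T(M,g)$ must have nontrivial kernel (your proposal uses only one family of harmonic one-forms). Taking $\omega_1\in\ker\Phi_N$ and $\omega_2\in\ker\Phi_T$ nonzero, the identity of Proposition \ref{prop:omegaN}(1) together with $H^{\partial\Omega}\ge 0$ forces $H^{\partial\Omega}\equiv 0$ on $\partial M$, $\lambda_{k+1}=0$, and that each $u_{ij}$ is a Jacobi field satisfying the Robin condition. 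The Robin condition for $\omega_1$ shows $D_\nu N$ is tangent to $\partial M$ on a dense subset; the same condition for $\omega_2$ shows $D_\nu N$ is orthogonal to $\partial M$ on a dense subset; together these give $D_\nu N=0$ on $\partial M$, so the second fundamental form vanishes along $\partial M$ and hence $M$ is totally geodesic. Only at this point does Ros's Lemma 1 from \cite{Ros1} enter, and its role is not a ``sharp integral inequality'' as you propose: it is a rigidity/classification lemma showing that the one-forms $\omega_1,\omega_2$ must be restrictions of constant vectors in $\mathbb{R}^3$. Since $M$ lies in a plane, this forces $\partial M$ to be a union of planar closed curves each everywhere tangent to a fixed nonzero constant vector --- an impossibility, completing the contradiction. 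Without the contradiction structure, the dual use of $\mathcal{H}^1_N$ and $\mathcal{H}^1_T$, the totally-geodesic step, and the correct reading of Ros's lemma, your outline cannot be completed as stated.
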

		\indent Let us remark that the conclusion of Theorem C coincides with the one obtained by Ros and Vergasta \cite{RosVer} in the special case of index one free boundary minimal surfaces in strictly convex domains of $\mathbb{R}^{3}$ (notice that there are no stable free boundary minimal surfaces in such domains). Furthermore, by following the computations of Savo \cite{Sav} and in the sole case of strictly convex bodies, the conclusion of Theorem C has been obtained independently by Sargent in \cite{Sar}. Again under a strict convexity condition, Sargent obtained the conclusion of our Theorem F, stated below, when $\alpha=0$.


		\begin{thmD} The above theorem can be used to understand the behaviour of the index of some known examples of free boundary minimal surfaces constructed in the unit ball in $\mathbb{R}^3$. In particular, the examples constructed by Fraser and Schoen \cite{FraSch3}, which have genus zero and an arbitrary number of boundary components, and the examples constructed by Folha, Pacard and Zolotareva \cite{FolPacZol}, which have genus one and an arbitrarily large number of boundary components, have their Morse indices growing linearly with the number of boundary components. 
		\end{thmD}	
		\indent Another corollary that can be deduced from the above estimate is a compactness theorem for free boundary minimal surfaces with bounded index in \textsl{strictly convex} domains. In fact, Fraser and Li proved that in those domains the set of compact, properly embedded free boundary minimal surfaces with uniformly bounded genus and number of boundary components is strongly compact  (Theorem 1.2 in \cite{FraLi}). In particular, our index estimate shows that the following statement is actually \textit{equivalent} to their compactness result.
		\begin{thmE}
		Let $\Omega^3$ be a compact domain in $\mathbb{R}^3$ whose boundary is strictly convex. Then any sequence $\{M_i^2\}$ of compact, properly embedded free boundary minimal surfaces in $\Omega$ that has uniformly bounded index has a subsequence converging smoothly and graphically to a compact properly embedded free boundary minimal surface $M^2$ in $\Omega$.
		\end{thmE}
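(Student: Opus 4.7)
The plan is to derive Corollary E as an almost immediate consequence of Theorem C combined with the Fraser-Li compactness theorem (Theorem 1.2 in \cite{FraLi}). The whole point of the corollary is that a uniform index bound is, \emph{a posteriori}, equivalent to a uniform bound on the topological invariants governing Fraser-Li's hypothesis, so the deduction is a matter of unpacking Theorem C in the strictly convex setting.

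First I would observe that a strictly convex bounded domain $\Omega \subset \mathbb{R}^3$ has strictly mean convex (in particular mean convex) boundary, so Theorem C applies to each $M_i$ provided that $M_i$ is orientable. This orientability comes for free: any compact properly embedded hypersurface in $\mathbb{R}^3$ is two-sided by Alexander duality (a compact embedded surface in a simply connected orientable $3$-manifold separates), hence orientable. Therefore, writing $g_i$ and $r_i\ge 1$ for the genus and the number of boundary components of $M_i$, Theorem C yields
\begin{equation*}
\mathrm{index}(M_i) \,\geq\, \tfrac{1}{3}(2g_i + r_i - 1).
\end{equation*}

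Next, since $\mathrm{index}(M_i)$ is uniformly bounded by hypothesis, the displayed inequality forces $g_i$ and $r_i$ to be \emph{simultaneously} uniformly bounded. At this point the sequence $\{M_i\}$ satisfies exactly the hypotheses of Fraser and Li's compactness theorem in \cite{FraLi}: compact, properly embedded free boundary minimal surfaces in a strictly convex Euclidean domain with uniformly bounded genus and uniformly bounded number of boundary components. Their theorem then delivers a subsequence converging smoothly and graphically to a compact properly embedded free boundary minimal surface $M \subset \Omega$, which is the desired conclusion.

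The step that requires any thought at all is the orientability remark (needed to apply Theorem C), and the only genuinely nontrivial input is Theorem C itself; everything else is a syllogism. There is no substantive analytic obstacle here, which is precisely the point the authors are making: Theorem C shows that the \emph{a priori} stronger-looking hypothesis of bounded index collapses onto Fraser-Li's topological hypothesis, exhibiting the two compactness statements as equivalent.
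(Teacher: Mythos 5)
Your proof is correct and follows exactly the route the paper intends: Theorem C converts the uniform index bound into a uniform bound on genus and number of boundary components, after which Fraser--Li's compactness theorem applies directly. The orientability remark is a nice touch, and it agrees with the observation the paper makes in Section~\ref{sec:indexfb} that simply-connected ambient domains (such as convex ones) force two-sidedness.
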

		
		\indent Lastly, we shall present here a variation on Theorem A which holds true for (strictly) two-convex domains of the Euclidean space. Let us recall that, if the second fundamental form $II^{\partial \Omega}$ is defined with respect to the outward unit normal of $\partial \Omega$ then two-convexity is equivalent to the requirement that the sum of \emph{any} two eigenvalues of $II^{\partial \Omega}$ be strictly positive.
		For this class of domains, we prove the following:
		
		\begin{thmF}
			Let $\Omega^{n+1}$ be a strictly two-convex domain of the $(n+1)$-dimensional Euclidean space. Let $M^n$ be a compact properly embedded free boundary minimal hypersurface of $\Omega$. Then, for any $\alpha \in [0,1]$ we have 
				\begin{equation*}
				index(M) \geq \frac{2}{n(n+1)}\left(\alpha dim H_1 (M,\partial M;\R) + (1-\alpha) dim H_{n-1}(M,\partial M;\R)\right)	\end{equation*}
				hence, as a special case
				\begin{equation*}
				index(M) \geq \frac{1}{n(n+1)}\left( dim H_1 (M,\partial M;\R) + dim H_{n-1}(M,\partial M;\R)\right).	\end{equation*}
				\end{thmF}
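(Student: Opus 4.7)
The stated inequality is a convex combination, with parameter $\alpha\in[0,1]$, of two separate lower bounds; since any convex combination of valid lower bounds is again a valid lower bound, it suffices to establish the two endpoints. The case $\alpha=1$, namely $\text{index}(M)\geq \tfrac{2}{n(n+1)}\dim H_1(M,\p M;\R)$, is already contained in Theorem A, because strict two-convexity of $\p\Omega$ is a stronger assumption than strict mean convexity. The new content is therefore the endpoint $\alpha=0$,
\begin{equation*}
\text{index}(M)\geq \frac{2}{n(n+1)}\dim H_{n-1}(M,\p M;\R),
\end{equation*}
and the plan is to establish it by adapting the harmonic-form argument used for Theorem A, switching the boundary conditions on the harmonic forms to their Hodge-dual counterparts and using strict two-convexity to control the resulting boundary integrand.

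Concretely, by the Hodge--Morrey--Friedrichs decomposition together with Poincar\'e--Lefschetz duality, the space of harmonic $1$-forms on $M$ satisfying the boundary conditions dual to those used in Theorem A has dimension $\dim H^{1}(M;\R)=\dim H_{n-1}(M,\p M;\R)$. For each such harmonic $\omega$ and each parallel vector field $e\in\R^{n+1}$, define $\phi_{\omega,e}=\omega(e^T)\in C^{\infty}(M)$, where $e^T$ is the component of $e$ tangent to $M$; because the free boundary condition makes $\nu^M$ tangent to $\p\Omega$ along $\p M$, the variation $\phi\,\nu^M$ is always admissible, so $\phi_{\omega,e}$ is a legitimate test function for the index form
\begin{equation*}
Q(\phi,\phi)=\il{M}\bigl(|\n\phi|^2-|A|^2\phi^2\bigr)-\il{\p M}II^{\p\Omega}(\nu^M,\nu^M)\,\phi^2.
\end{equation*}
Summing over an orthonormal basis $\{e_i\}_{i=1}^{n+1}$ of $\R^{n+1}$ and applying the Bochner--Weitzenb\"ock identity, Simons' identity for minimal hypersurfaces of $\R^{n+1}$, and the Gauss equation exactly as in Theorem A, one arrives at a trace inequality of the schematic form
\begin{equation*}
\sum_{i=1}^{n+1}Q(\phi_{\omega,e_i},\phi_{\omega,e_i})\leq -\il{M}|\omega|^2|A|^2+\il{\p M}|\omega|^2\,T(II^{\p\Omega}),
\end{equation*}
where $T(II^{\p\Omega})$ is the total pointwise boundary integrand produced by combining the explicit $II^{\p\Omega}(\nu^M,\nu^M)\phi^2$ term with the boundary contribution generated by the Bochner integration by parts. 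The crucial difference from Theorem A is that, for the Hodge-dual boundary condition on $\omega$, $T(II^{\p\Omega})$ turns out to be (up to a negative constant) a sum of exactly two principal curvatures of $\p\Omega$, so strict two-convexity guarantees it has the correct sign pointwise on $\p M$.

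The rest follows the template of \cite{AmbCarSha} and Theorem A: the strict negativity of the trace on the $(n+1)$-dimensional span $\{\phi_{\omega,e_i}\}_i$ forces at least one negative direction of the index form for every non-trivial $\omega$, and the kernel of the linear map $\omega\mapsto(\phi_{\omega,e_i})_{i}$, whose size is controlled by the $\binom{n+1}{2}=n(n+1)/2$-dimensional space of infinitesimal ambient rotations of $\R^{n+1}$, accounts for the factor $\tfrac{2}{n(n+1)}$ in the final bound on the index. The main obstacle I anticipate is the second step: explicitly identifying the pointwise boundary integrand $T(II^{\p\Omega})$ corresponding to the Hodge-dual boundary condition, and verifying that it indeed involves precisely two eigenvalues of $II^{\p\Omega}$, so that \emph{two-convexity}---rather than mean convexity---is the right hypothesis ensuring the sign. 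This requires careful bookkeeping of the Bochner boundary terms, of the Hodge-star duality between the relevant boundary conditions, and of the interaction with the second fundamental form of $\p M\subset M$.
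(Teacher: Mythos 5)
The overall reduction you propose is correct: the inequality for general $\alpha\in[0,1]$ is a convex combination of its two endpoints, $\alpha=1$ is Theorem A (strict two-convexity being stronger than strict mean convexity), and the only new content is the $\alpha=0$ bound, which the paper isolates as Theorem \ref{thm:2conv}. You also identify the right Hodge-theoretic input: work with $\omega\in\mathcal{H}^{1}_{N}(M,g)$ (harmonic, \emph{tangential} at $\partial M$), with $\dim\mathcal{H}^{1}_{N}(M,g)=\dim H^{1}(M;\R)=\dim H_{n-1}(M,\partial M;\R)$.

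The test functions you write down, however, are not the ones the argument needs, and they would not produce the stated estimate. You use $\phi_{\omega,e}=\omega(e^T)=\langle\omega^\sharp,e\rangle$, a family of $n+1$ functions indexed by an orthonormal basis of $\R^{n+1}$; the paper instead uses the $\binom{n+1}{2}$ coordinate functions $u_{ij}=\langle N\wedge\omega^\sharp,\theta_i\wedge\theta_j\rangle$ of the \emph{bivector} $N\wedge\omega^\sharp$. This is not cosmetic. The denominator $n(n+1)/2$ in the index bound is exactly the number of test functions used in the variational argument (each $u_{ij}$ must be $L^2$-orthogonal to the $k$ lowest eigenfunctions, giving $\dim\mathcal{H}^1_N\leq\dim\ker\Phi+\tfrac{n(n+1)}{2}k$, and $\Phi$ is then shown to be \emph{injective}); your $n+1$ functions would yield a factor $1/(n+1)$, and your appeal to the ``space of infinitesimal ambient rotations'' to explain $\binom{n+1}{2}$ is a red herring, since no bound on $\ker\Phi$ by rotations appears anywhere. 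More seriously, the interior term does not close with $\phi_{\omega,e}$. For the bivector one has the exact identity $|D(N\wedge\omega^\sharp)|^2=|\nabla^M\omega|^2-|A(\omega^\sharp,\cdot)|^2+|A|^2|\omega|^2$, so the $|A|^2$ terms cancel precisely against $-|A|^2\phi^2$ in the index form and the whole interior contribution reduces, via Bochner, to a pure boundary integral. With $\phi_{\omega,e_i}=\langle\omega^\sharp,e_i\rangle$ one gets $\sum_i|\nabla^M\phi_{\omega,e_i}|^2=|\nabla^M\omega|^2+|A(\omega^\sharp,\cdot)|^2$, and after Bochner and Gauss there remains a residual interior term proportional to $2|A(\omega^\sharp,\cdot)|^2-|A|^2|\omega|^2$, which has no definite sign for $n\geq 3$ (take $A=\mathrm{diag}(a,a,-2a)$ and $\omega^\sharp$ along the last direction); so your schematic bound with $-\int_M|\omega|^2|A|^2$ on the right is unjustified. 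Finally, once the correct $u_{ij}$ are used, Proposition \ref{prop:omegaN}(2) gives the boundary integrand exactly as $-\bigl(II^{\partial\Omega}(N,N)|\omega|^2+II^{\partial\Omega}(\omega^\sharp,\omega^\sharp)\bigr)$; since $N$ and $\omega^\sharp/|\omega|$ are \emph{orthonormal} vectors tangent to $\partial\Omega$ (free boundary plus $i_\nu\omega=0$), strict two-convexity supplies the pointwise sign — this resolves the ``main obstacle'' you flag.
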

				
				\begin{thmG}
					 All the main results in this paper (with the sole exception of Corollary E, which relies on the compactness statement by Fraser and Li), including those in Section \ref{app:general}, actually hold for properly \textsl{immersed} free boundary minimal hypersurfaces. The necessary modifications to our proofs are of purely notational character.
			\end{thmG}

				\indent The paper is organised as follows: some preliminary facts are recalled in Section \ref{sec:indexfb} (concerning the Morse index for free boundary minimal hypersurfaces) and in Section \ref{sec:hodge} (concerning some Hodge-theoretic aspects for manifolds with boundary), while the core of our approach (following \cite{AmbCarSha}) is presented in Section \ref{sec:testf} and Section \ref{sec:mainproof}, the latter devoted to completing the proofs of Theorem A, Theorem C and Theorem F. The case of general ambient manifolds with special cases of particular interest is presented in Section \ref{app:general}.   \\
		 	
		\noindent \textit{Acknowledgements:} The authors would like to thank Ivaldo Nunes, Fernando Cod\'a Marques and Andr\'e Neves for their interest in this work. L. A. is supported by the ERC Start Grant PSC and LMCF 278940 and would like to thank the Scuola Normale Superiore where part of this project was completed. This article was done while  A. C. was an ETH-ITS fellow: the outstanding support of Dr. Max R\"ossler, of the Walter Haefner Foundation and of the ETH Z\"urich Foundation are gratefully acknowledged. B.S. would like to thank the ETH-FIM for their hospitality and excellent working environment during the completion of this project. B.S. was partially supported by the Scuola Normale Superiore (Commissione Ricerca, Progetto Giovani Ricercatori).
	
	\section{The index of free boundary minimal hypersurfaces}\label{sec:indexfb}

	\indent Let $(\Omega^{n+1},g)$ be a smooth, orientable Riemannian manifold with boundary $\partial \Omega$. We say that a compact, connected, embedded hypersurface $M^n$ in $\Omega^{n+1}$ is properly embedded if it has no interior points touching $\partial \Omega$, i.e., $M \cap \partial \Omega = \partial M$. Throughout this paper, we always tacitly assume that $M^n$ is itself orientable (hence, equivalently, two-sided) and choose a unit vector field $N$ normal to $M^n$ (the one-sided case can be dealt with as in \cite{AmbCarSha}, Section 2). Let us remark that this is always the case if $\Omega^{n+1}$ is simply-connected (e. g. for convex domains in $\mathbb{R}^{n+1}$). We shall denote by $\nu$ the outward pointing conormal of $\partial M$, i.e., the unique unit vector field on $\partial M$ that is tangent to $M^n$,  normal to $\partial M$ and points outside $M^n$. \\
	\indent When considering the area functional restricted to this class of hypersurfaces, the allowed variations are produced by flows $\psi_t$ of vector fields $X$ on $\Omega^{n+1}$ that are tangent to $\partial \Omega$. The first variation formula of area for such an admissible variation is given by 
	\begin{equation*}
	\frac{d}{dt}_{|t=0}|\psi_{t}(M)| = \int_{M}H^{M}g(N,X) d\mu + \int_{\partial M} g(\nu,X) d\sigma.
	\end{equation*}
	\indent It follows that critical points are minimal ($H^{M}=0$) and intersect $\partial \Omega$ orthogonally ($\nu \perp \partial \Omega$). The last condition is known as the free boundary property of $M^n$. \\
	\indent  Given \textit{any} smooth function $\phi$ on $M^n$, there exists an admissible vector field $X$ such that $X=\phi N$ on $M^n$ (see for example \cite{MaxNunSmi}, Section 2). The second variation of area at a free boundary minimal hypersurface $M^n$ along the flow of the vector fields considered above defines the quadratic form
	\begin{equation*}
	Q^M(\phi,\phi) := \frac{d^2}{dt^2}_{|t=0}|\psi_{t}(M)| = \int_{M} |\nabla^{M}\phi|^2 - (Ric^{\Omega}(N,N) + |A|^2)\phi^2d\mu - \int_{\partial M} \langle D_{N}\nu,N \rangle \phi^2d\sigma.
	\end{equation*}
	\indent In the above formula, $Ric^{\Omega}$ denotes the Ricci tensor of $\Omega$, $A$ denotes the second fundamental form of $M^n$ and $D$ the covariant derivative in $(\Omega,g)$. Notice that, since $M^n$ is free boundary, $N$ is tangent to $\partial \Omega$ and the term $\langle D_{N}\nu,N \rangle$ is precisely the second fundamental form $II^{\partial \Omega}$ of the boundary of the domain with respect to $\nu$ applied to the vector field $N$. \\
	\indent The quadratic form $Q^M$ is called the index form of the free boundary minimal surface $M^n$. The index of $M^n$ is defined as the index of $Q^{M}$, that is, the maximal dimension of a linear subspace $V$ in $C^{\infty}(M)$ such that $Q^M(\phi,\phi) < 0$ for all $\phi$ in $V\setminus\{0\}$. The index can be computed analytically in terms of the spectrum of a second order differential operator with Robin boundary conditions. More precisely, integration by parts gives
	\begin{equation*}
	Q^{M}(\phi,\phi) = - \int_{M} \phi\mathcal{L}_{M}(\phi) d\mu + \int_{\partial M} \phi\left(\frac{\partial\phi}{\partial \nu} - II^{\partial \Omega}(N,N)\phi\right) d\sigma
	\end{equation*}
	where $\mathcal{L}_{M} = \Delta_{M} + Ric^{\Omega}(N,N) + |A|^2$ is the Jacobi operator of $M^n$. The boundary condition
	\begin{equation*}
	\frac{\partial\phi}{\partial \nu} = II^{\partial \Omega}(N,N)\phi
	\end{equation*}
	is an elliptic boundary condition for $\mathcal{L}_{M}$, therefore there exists a non-decreasing and diverging sequence $\lambda_{1}\leq \lambda_{2}\leq\ldots \leq \lambda_{k} \nearrow \infty$ of eigenvalues  associated to a $L^{2}(M,d\mu)$-orthonormal basis $\{\phi_k\}_{k=1}^{\infty}$ of solutions to the eigenvalue problem
	\begin{equation*}
 	\begin{cases}
		\mathcal{L}_{M}(\phi) + \lambda \phi  = 0 \quad & \text{on} \quad M^n, \\
		\frac{\partial\phi}{\partial \nu} - II^{\partial \Omega}(N,N)\phi = 0\quad & \text{on} \quad \partial M.
	\end{cases} 
	\ \ \ (*)
	\end{equation*}
	\indent The index of the free boundary minimal hypersurface $M^n$ is then equal to the number of negative eigenvalues of the system $(*)$ above (see more details in \cite{Sch}, \cite{CheFraPan} and \cite{MaxNunSmi}, Section 2). \\
	\indent The solutions of $(*)$ have a standard variational characterization: If $V_{k}$ denotes the subspace spanned by the first $k$ eigenfunctions for the above problem, then the value of the next eigenvalue $\lambda_{k+1}(\mathcal{L}_{M})$ equals the minimum of $Q^{M}$ on the $L^{2}(M,d\mu)$ orthogonal complement of $V_k$. i.e.,
	\begin{equation*}
	\lambda_{k+1}(\mathcal{L}_M) = \min_{\phi \in V_{k}^{\perp}\setminus \{0\}} \frac{Q^{M}(\phi,\phi)}{\int_{M}\phi^{2}d\mu}.
	\end{equation*}
	\indent The minimum is attained precisely by eigenfunctions of $\mathcal{L}_{M}$ associated to $\lambda_{k+1}$ and satisfying the boundary conditions in $(*)$.
	
	\section{Hodge Theory and Bochner formula for manifolds with boundary}\label{sec:hodge}
	
	\indent Let $(M^n,g)$ be a compact orientable manifold with non-empty boundary. The Hodge Theorem asserting the existence of a unique harmonic representative in every de Rham cohomology class can be extended to this setting when one assumes the appropriate boundary condition for the harmonic forms. A detailed account on this generalization of Hodge's Theorem, including an overview of its historical developments, can be found in \cite{Schw}. \\
	\indent Let $d$ denote the exterior differential on $M^n$ and let $d^{\ast}: \Omega^{p}(M)\to\Omega^{p-1}(M)$ denote the codifferential defined in terms of the Hodge star operator on $(M^n,g)$ (so that, as a result, $d^{\ast}= (-1)^{n(p+1)+1}*d* $). We define the sets
	\begin{equation*}
	\mathcal{H}^{p}_{N}(M,g) = \{\omega\in \Omega^{p}(M);\, d\omega =0, \, d^{\ast}\omega =0 \,\, \text{on} \,\, M^n \,\, \text{and} \,\, i_{\nu}\omega = 0 \,\, \text{on} \,\, \partial M\}
	\end{equation*}
	and
	\begin{equation*}
	\mathcal{H}^{p}_{T}(M,g) = \{\omega\in \Omega^{p}(M);\, d\omega =0, \, d^{\ast}\omega =0 \,\, \text{on} \,\, M^n \,\, \text{and} \,\, \nu\wedge\omega = 0 \,\, \text{on} \,\, \partial M\}.
	\end{equation*}
	\indent In other words, $\mathcal{H}^{p}_{N}(M,g)$ is the set of harmonic $p$-forms that are tangential at $\partial M$ and $\mathcal{H}^{p}_{T}(M,g)$ is the set of harmonic $p$-forms that are normal at $\partial M$.
	\begin{rmk}
	On manifolds with boundary, it is no longer true that a solution to the equation $\Delta_{p}\omega = (dd^{\ast} +d^{\ast} d)\omega=0$ is also a solution to both equations $d\omega =0$ and $d^{\ast}\omega = 0$. We use the expression ``harmonic form'' to call any differential form that is simultaneously closed and co-closed.
	\end{rmk}
	\indent The following lemma will be needed in the proof of our main theorems.
	\begin{lemm}[Cf. \cite{Schw}, Theorem 3.4.4]\label{lem:maxprinc} Let $(M^n,g)$ be a complete, connected, orientable Riemannian manifold with non-empty boundary $\partial M$. If a harmonic $p$-form vanishes identically on $U\cap\partial M^n\neq\emptyset$ for some open subset $U\subset M$, then it vanishes identically on $M^n$.
	\end{lemm}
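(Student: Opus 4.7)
The plan is to reduce the statement to Aronszajn's classical strong unique continuation theorem. Since $\omega$ is closed and co-closed, it satisfies the Hodge Laplace equation $(dd^{\ast}+d^{\ast}d)\omega = 0$ on $M^n$; this is a second-order elliptic system with scalar principal symbol $|\xi|^2 \mathrm{Id}$, and hence any smooth solution which vanishes to infinite order at some point of the connected manifold $M^n$ must vanish identically. The whole task is therefore to promote the hypothesis (vanishing on an open subset $V = U\cap\partial M$) to infinite-order vanishing at a boundary point $p\in V$.

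To achieve this, I would work in boundary normal coordinates $(x^1,\ldots,x^{n-1},t)$ centred at $p$, with $t\geq 0$ the distance to $\partial M$, and decompose $\omega = \omega_T + dt\wedge \omega_N$ where $\omega_T$ and $\omega_N$ are $t$-dependent forms on the slices $\{t = \mathrm{const}\}$. Writing out $d\omega = 0$ and $d^{\ast}\omega = 0$ in this splitting and separating the normal from the tangential parts yields a first-order system of Cauchy--Kovalevskaya form, schematically
\begin{equation*}
\partial_t \omega_T = d_x \omega_N, \qquad \partial_t \omega_N = -d^{\ast}_x \omega_T + (\text{zeroth-order curvature terms})\omega,
\end{equation*}
which expresses all $\partial_t$-derivatives of $\omega_T,\omega_N$ in terms of tangential derivatives of $\omega_T,\omega_N$ together with geometric coefficients. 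Since by hypothesis $\omega_T$ and $\omega_N$ vanish on $V$, all tangential derivatives $\partial_x^{\alpha}\omega_T(x,0)$, $\partial_x^{\alpha}\omega_N(x,0)$ are zero at the origin, and feeding this into the system one obtains by a straightforward induction on $k$ that $\partial_t^{k}\partial_x^{\alpha}\omega_T(0,0) = 0$ and $\partial_t^{k}\partial_x^{\alpha}\omega_N(0,0) = 0$ for every multi-index $\alpha$ and every $k\geq 0$. In other words, the full Taylor expansion of $\omega$ at $p$ in boundary normal coordinates vanishes identically.

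The last step is to conclude from infinite-order vanishing at $p\in\partial M$ that $\omega\equiv 0$ on $M^n$. I would invoke the boundary version of Aronszajn's theorem (as in Aronszajn--Krzywicki--Szarski) applied to the Hodge Laplacian, which states that a smooth solution vanishing to infinite order at a boundary point of a connected manifold must vanish throughout; equivalently, one can extend $\omega$ by zero through $V$ into a collar of $\partial M$ attached to the exterior, observe that the resulting form is $C^{\infty}$ near $p$ (thanks to the infinite-order vanishing) and harmonic in a genuine interior neighbourhood of $p$, and then apply the interior Aronszajn theorem. The main technical obstacle I expect is the bookkeeping in the derivation of the first-order Cauchy system from $d\omega = 0$ and $d^{\ast}\omega = 0$ in boundary normal coordinates and the verification that the induction on normal derivatives closes up cleanly in the presence of the curvature terms of $\partial M$; fortunately this is exactly the computation carried out in \cite{Schw}, Theorem 3.4.4, which one may simply cite once the geometric setup is in place.
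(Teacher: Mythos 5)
The paper states this lemma with only a citation to Schwarz's Theorem 3.4.4 and no proof of its own; your proposal is a faithful reconstruction of the argument behind that reference, and it is sound. The derivation of a first-order Cauchy system in $t$ from the pair $d\omega=0$, $d^{\ast}\omega=0$ in boundary normal coordinates, the induction on normal derivatives showing infinite-order vanishing along the open set $V\subset\partial M$ (note the $t$-dependent coefficients of the zeroth-order terms cause no trouble, by Leibniz), and the passage to interior Aronszajn by extending $\omega$ by zero across a collar attached along $V$ and then propagating from the resulting open vanishing set inside $M$, are exactly the ingredients one finds in the cited theorem.
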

	\indent Using the above terminology, the Hodge-de Rham theorem can be stated as follows.
	\begin{thm}\label{thm:hdr}
	Let $(M^n,g)$ be a compact orientable manifold with non-empty boundary. For every $p=0,\ldots,n$, the set of harmonic $p$-forms on $M^n$ that are tangential at $\partial M$ is isomorphic to the $p$-th cohomology group of $M^n$ with real coefficients, i.e.,
	\begin{equation*}
		\mathcal{H}^{p}_{N}(M,g) \simeq H^{p}(M;\mathbb{R}).
	\end{equation*}		
	\end{thm}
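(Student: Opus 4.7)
The plan is to construct an explicit isomorphism $\Phi: \mathcal{H}^p_N(M,g) \to H^p(M;\mathbb{R})$ defined by $\omega \mapsto [\omega]$, the de Rham cohomology class represented by $\omega$. This map is well-defined since every $\omega \in \mathcal{H}^p_N(M,g)$ is closed by definition. Injectivity and surjectivity of $\Phi$ will be established separately, the former being essentially a one-line integration by parts and the latter constituting the analytic core of the theorem.

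For injectivity, suppose $\omega\in\mathcal{H}^p_N(M,g)$ is exact, so $\omega = d\beta$ for some $\beta\in\Omega^{p-1}(M)$. The standard Green's identity for the pair $(d,d^\ast)$ yields
\[
\int_M |\omega|^2\, d\mu \;=\; \int_M \langle d\beta,\omega\rangle\, d\mu \;=\; \int_M \langle \beta,d^\ast\omega\rangle\, d\mu \;+\; \int_{\partial M} \beta\wedge\ast\omega.
\]
The interior term on the right vanishes since $\omega$ is co-closed, while the integrand $\beta\wedge\ast\omega$ vanishes on $\partial M$ because the condition $i_\nu\omega = 0$ is equivalent to $\ast\omega$ having zero pullback to $\partial M$. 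Thus $\omega \equiv 0$ and $\Phi$ is injective.

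For surjectivity, given a class represented by a closed form $\alpha\in\Omega^p(M)$, I would seek $\beta\in\Omega^{p-1}(M)$ such that $\omega := \alpha - d\beta$ lies in $\mathcal{H}^p_N(M,g)$. The required equations are $d^\ast d\beta = d^\ast\alpha$ on $M$ and $i_\nu d\beta = i_\nu\alpha$ on $\partial M$, which constitute an elliptic oblique boundary value problem once a gauge fixing condition such as $d^\ast\beta = 0$, $i_\nu\beta = 0$ is imposed. The systematic way to carry this out is to first establish the $L^2$-orthogonal Hodge-Morrey decomposition
\[
\Omega^p(M) \;=\; \mathcal{H}^p_N(M,g) \;\oplus\; d\,\Omega^{p-1}(M) \;\oplus\; d^\ast\bigl\{\eta\in\Omega^{p+1}(M) : i_\nu\eta = 0\bigr\},
\]
from which surjectivity follows formally: integrating by parts shows that any closed $\alpha$ has zero component in the last summand, so $\alpha$ differs from its $\mathcal{H}^p_N$-projection only by an exact form and hence defines the same cohomology class.

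The main obstacle is precisely the Hodge-Morrey decomposition above, which rests on a priori estimates and Fredholm theory for the Hodge Laplacian $\Delta_p = dd^\ast + d^\ast d$ under absolute Neumann-type boundary conditions simultaneously prescribing $i_\nu\omega$ and $i_\nu d\omega$. Establishing the self-adjointness, elliptic regularity, and finite-dimensional kernel of this boundary value problem is the substantive analytic content of the theorem, and is carried out in detail in Schwarz's monograph cited in the statement; once in hand, the injectivity and surjectivity arguments above conclude the proof.
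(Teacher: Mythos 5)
The paper itself gives no proof of this theorem: it is stated as a citation to Taylor's \emph{Partial Differential Equations I} (Chapter 5, Section 9), with the further remark that Ros \cite{Ros2} gives an elementary argument for the special case $p=1$ that is actually used in the sequel. So there is no in-paper argument to compare against; what you have supplied is a sketch of the proof that those references contain, and it is a faithful one. Your injectivity argument is exactly right: for $\omega = d\beta \in \mathcal{H}^p_N$, Green's identity reduces $\|\omega\|^2_{L^2}$ to the boundary term $\int_{\partial M}\beta\wedge *\omega$, which vanishes because $i_\nu\omega = 0$ is equivalent to $i^*(*\omega)=0$ on $\partial M$; this also shows directly that $\mathcal{H}^p_N \cap d\Omega^{p-1} = 0$. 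Your surjectivity argument via the orthogonal decomposition, together with the observation that closed forms are $L^2$-orthogonal to $d^*\{\eta : i_\nu\eta=0\}$, is likewise correct.

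Two small remarks on the decomposition you invoke. First, the exact form you wrote, $\Omega^p = \mathcal{H}^p_N \oplus d\Omega^{p-1} \oplus d^*\Omega^{p+1}_N$, is not literally the Hodge--Morrey decomposition in Schwarz (which splits off $d\Omega^{p-1}_D$, i.e.\ exact forms with \emph{Dirichlet} potential, and keeps the full harmonic field space $\mathcal{H}^p$ as a summand), but rather the consequence of combining it with the Friedrichs refinement $\mathcal{H}^p = \mathcal{H}^p_N \oplus (\mathcal{H}^p \cap d\Omega^{p-1})$ and then noting that $d\Omega^{p-1}_D \oplus (\mathcal{H}^p \cap d\Omega^{p-1}) = d\Omega^{p-1}$, where the injectivity computation you already gave is precisely what makes this last identification a direct sum. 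Stating that provenance explicitly would close the one genuine gap in the outline, since as written the displayed decomposition is asserted rather than reduced to the results actually proved in Schwarz. Second, you correctly flag that the analytic content (elliptic estimates and Fredholm theory for $\Delta_p$ under absolute boundary conditions) is the real work and is beyond the scope of a short proof; the paper makes the same choice by citing the literature, so that deferral is entirely appropriate here.
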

	\indent A proof of Theorem \ref{thm:hdr} can be found, for instance, in \cite{Tay} (see Chapter 5, Section 9). An elementary and elegant proof that $\mathcal{H}^{1}_{N}(M,g)$ is isomorphic to $H^{1}(M;\mathbb{R})$ is also given in \cite{Ros2}, Lemma 1.  \\
	\indent Observe that the Hodge star operator of $(M^n,g)$ gives an isomorphism between $\mathcal{H}^{p}_{N}(M,g)$ and $\mathcal{H}^{n-p}_{T}(M,g)$. Hence, we have the isomorphisms
	\begin{equation*}
		\mathcal{H}^{p}_{T}(M,g) \simeq \mathcal{H}^{n-p}_{N}(M,g) \simeq H^{n-p}(M;\mathbb{R}) \simeq H_{p}(M,\partial M;\mathbb{R}).
	\end{equation*}
	the last following by Poincar\'e-Lefschetz duality (see, for example, \cite{Hat}, Theorem 3.43). \\
	\indent Once we know that $\mathcal{H}^{1}_{T}(M,g)$ is isomorphic to $H_{1}(M,\partial M;\mathbb{R})$, it is interesting to compute the dimension of this relative homology group in terms of homology groups of $M$ and $\partial M$.
	\begin{lemm}\label{lem:relhom}
	Let $M^n$ be a compact, orientable (connected) $n$-dimensional manifold with non-empty boundary $\partial M$, $n\geq 2$. If $\partial M$ has $r\geq 1$ boundary components, then
	\begin{equation*}
	 dim H_{1}(M,\partial M;\mathbb{R}) = (r-1) + (dim H_{1}(M;\mathbb{R}) - dim Im(i_{*})),
	\end{equation*}
where $i_{*} : H_{1}(\partial M;\mathbb{R}) \rightarrow H_{1}(M;\mathbb{R})$ denotes the map between first homology groups induced by the inclusion $i: \partial M \rightarrow M$.
	\end{lemm}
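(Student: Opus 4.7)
The plan is to extract the dimension of $H_1(M,\partial M;\mathbb{R})$ from the long exact sequence of the pair $(M,\partial M)$ in real coefficients, truncated at the level of $H_0$. Since all vector spaces here are finite-dimensional, exactness translates immediately into additive relations among dimensions via the rank-nullity theorem.

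Concretely, I would write down the relevant tail of the long exact sequence,
\begin{equation*}
H_1(\partial M;\R) \xrightarrow{i_{*}} H_1(M;\R) \xrightarrow{j_{*}} H_1(M,\partial M;\R) \xrightarrow{\partial} H_0(\partial M;\R) \xrightarrow{i_{*}} H_0(M;\R) \to 0,
\end{equation*}
and record two standard inputs: because $M$ is connected, $\dim H_0(M;\R)=1$, while because $\partial M$ has $r$ components, $\dim H_0(\partial M;\R)=r$, and the induced map $H_0(\partial M;\R) \to H_0(M;\R)$ is surjective (every boundary component sits inside the single component $M$). Hence its kernel has dimension $r-1$, and by exactness this is precisely $\dim \operatorname{Im}(\partial)$.

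Next I would compute $\dim \operatorname{Im}(j_{*})$. Exactness at $H_1(M;\R)$ gives $\ker(j_{*}) = \operatorname{Im}(i_{*})$, so
\begin{equation*}
\dim \operatorname{Im}(j_{*}) = \dim H_1(M;\R) - \dim \operatorname{Im}(i_{*}).
\end{equation*}
Finally, exactness at $H_1(M,\partial M;\R)$ yields the splitting
\begin{equation*}
\dim H_1(M,\partial M;\R) = \dim \operatorname{Im}(j_{*}) + \dim \operatorname{Im}(\partial) = \bigl(\dim H_1(M;\R) - \dim \operatorname{Im}(i_{*})\bigr) + (r-1),
\end{equation*}
which is the desired identity.

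There is no real obstacle here: the entire statement is a bookkeeping exercise on the long exact sequence of a pair, combined with the elementary observation that the connecting map into $H_0(\partial M;\R)$ sees precisely the $(r-1)$-dimensional space of boundary components modulo the connectedness of $M$. The only point worth being careful about is ensuring that the map $H_0(\partial M;\R) \to H_0(M;\R)$ is indeed surjective, which is immediate from $\partial M \neq \emptyset$ and the connectedness of $M$; this is needed to terminate the sequence cleanly at $0$ and to identify $\dim\operatorname{Im}(\partial)$ with $r-1$ rather than with a smaller quantity.
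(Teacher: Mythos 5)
Your proof is correct and follows essentially the same route as the paper: a dimension count on the tail of the long exact sequence of the pair $(M,\partial M)$, using $\dim H_0(M;\R)=1$, $\dim H_0(\partial M;\R)=r$, and the vanishing of $H_0(M,\partial M;\R)$ (equivalently, the surjectivity of $H_0(\partial M;\R)\to H_0(M;\R)$ that you invoke). You simply spell out the rank bookkeeping that the paper leaves to the reader.
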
	
	\begin{proof}
	The definition of zero-th homology groups immediately yields $dim H_{0}(M;\mathbb{R})=1$, $dim H_{0}(\partial M;\mathbb{R})=r$ and $dim H_{0}(M,\partial M;\mathbb{R})= 0$. At that stage, a direct computation involving the last part of the long exact sequence for the pair $(M,\partial M)$,
	\begin{equation*}
	   H_{1}(\partial M;\mathbb{R}) \overset{i_{\ast}}{\rightarrow} H_{1}(M;\mathbb{R})\rightarrow H_{1}(M,\partial M;\mathbb{R})\rightarrow H_{0}(\partial M;\mathbb{R}) \rightarrow H_{0}(M;\mathbb{R}) \rightarrow H_{0}(M,\partial M;\mathbb{R})
	\end{equation*}
	\noindent proves the result.
	\end{proof}
	
	The following lemma can be proven directly, in a standard fashion, by using the representation of a compact surface with boundary as a polygon with identified edges and small open balls removed.
	
	\begin{lemm}\label{lem:twodimhom}
	Let $M^2$ be a compact, orientable surface with non-empty boundary $\partial M$. If $M$ has genus $g$ and $r\geq 1$ boundary components, then
	\begin{equation*}
	dim H_{1}(M,\partial M) = 2g + r - 1.
	\end{equation*}
	\end{lemm}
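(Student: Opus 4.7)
The plan is to invoke Lemma~\ref{lem:relhom} after computing the two ingredients $\dim H_1(M;\R)$ and $\dim \operatorname{Im}(i_*)$ that appear on its right-hand side.

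First I would argue that $M$ deformation retracts onto a wedge of $2g + r - 1$ circles, so that $\dim H_1(M;\R) = 2g + r - 1$. The cleanest route is to start from the standard polygon model of the closed orientable surface of genus $g$, i.e., a $4g$-gon with side identifications encoded by the word $a_1 b_1 a_1^{-1} b_1^{-1} \cdots a_g b_g a_g^{-1} b_g^{-1}$, and then remove $r$ small open disks from the interior of the $2$-cell. The resulting $2$-cell is homeomorphic to a disk with $r$ holes, which has a free edge on each boundary circle; collapsing it onto its $1$-skeleton (and, if needed, contracting a spanning tree of the resulting graph) yields a CW structure with one $0$-cell and exactly $2g + r - 1$ $1$-cells, i.e., a wedge of $2g + r - 1$ circles.

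Next I would show that $\dim \operatorname{Im}(i_*) = r - 1$. For the upper bound, let $c_1,\dots,c_r$ denote the boundary components oriented coherently as the boundary of $M$; then $\sum_i [c_i] = \partial_* [M,\partial M]$, where $[M,\partial M]\in H_2(M,\partial M;\R)$ is the fundamental class, and the exactness of the long exact sequence of the pair $(M,\partial M)$ forces $i_*\partial_*=0$. Thus $\sum_i i_*[c_i] = 0$ in $H_1(M;\R)$, so the image has dimension at most $r-1$. For the matching lower bound, one exhibits, for any $r-1$ of the boundary components, properly embedded arcs in $M$ each of which meets exactly one of the chosen circles transversally in a single point and is disjoint from the others; the induced (relative) intersection pairing then certifies linear independence. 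Such arcs are easy to draw in the polygon model: each one simply connects the boundary of one removed disk to the boundary of another.

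Substituting into Lemma~\ref{lem:relhom},
\begin{equation*}
\dim H_1(M,\partial M;\R) = (r-1) + \bigl((2g + r - 1) - (r-1)\bigr) = 2g + r - 1,
\end{equation*}
which is the stated identity.

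The main obstacle in this program is the lower bound in the computation of $\dim \operatorname{Im}(i_*)$: the vanishing of the sum of the boundary cycles is automatic from the long exact sequence, but ruling out any further relation requires producing genuinely independent geometric witnesses (the transverse arcs above), and this is where the polygon representation enters essentially. Everything else is routine bookkeeping with the Euler characteristic $\chi(M) = 2-2g-r$ and the already established relation between $H_1(M,\partial M;\R)$ and the data of $M$ and $\partial M$.
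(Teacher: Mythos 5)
Your proof is correct, but it takes a more roundabout route than the paper intends. The paper's ``proof'' is a one-line pointer to a direct CW computation in the polygon model; you instead feed the two quantities $\dim H_1(M;\R)$ and $\dim\operatorname{Im}(i_*)$ into Lemma~\ref{lem:relhom}. Your computation of $\dim H_1(M;\R)=2g+r-1$ (wedge of circles, or equivalently $1-\chi(M)$) is standard and fine, and your two-sided bound $\dim\operatorname{Im}(i_*)=r-1$ is also correct: the upper bound from $i_*\circ\partial_*=0$ is clean, and the lower bound via the Lefschetz intersection pairing does work once one pushes the boundary curves inward to make the pairing with the arcs $\gamma_k$ literally transverse (your phrase ``meets $c_k$ transversally in a single point'' is slightly loose for an arc whose \emph{endpoint} lies on $c_k$, but the intended geometric picture is unambiguous). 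Two remarks worth recording. First, you could have obtained the lower bound on $\dim\operatorname{Im}(i_*)$ without any intersection theory: in the long exact sequence of the pair, $H_2(M;\R)=0$ (as $M$ is an open surface up to homotopy) and $H_2(M,\partial M;\R)\cong H^0(M;\R)\cong\R$, so $\partial_*$ is injective with one-dimensional image, whence $\operatorname{Ker}(i_*)$ is one-dimensional and $\dim\operatorname{Im}(i_*)=r-1$ outright. Second, the shortest path to the lemma bypasses Lemma~\ref{lem:relhom} altogether: Poincar\'e--Lefschetz duality together with universal coefficients over $\R$ gives $H_1(M,\partial M;\R)\cong H^1(M;\R)\cong H_1(M;\R)$, which has dimension $2g+r-1$; this is worth noting since the paper already invokes exactly this duality a few lines earlier to identify $\mathcal{H}^1_T(M,g)$ with $H_1(M,\partial M;\R)$.
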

	\indent We finish this section computing the boundary term that appears when performing integration by parts of the Bochner formula for one-forms in compact manifolds with boundary. 
	 
	\begin{lemm}\label{lem:bochner}
	Let $(M^n,g)$ be a compact, orientable Riemannian manifold with boundary. 
	\begin{enumerate}
	\item{Let $\omega\in \mathcal{H}^{1}_{T}(M,g)$ be a harmonic one-form on $M^n$ that is normal at the boundary. Then
	\begin{equation*}
	\int_{M}|\nabla^{M}\omega|^2 + Ric^M(\omega,\omega)d\mu = -\int_{\partial M}H^{\partial M}|\omega|^2d\sigma.
	\end{equation*}}
   \item{Let $\omega\in \mathcal{H}^{1}_{N}(M,g)$ be a harmonic one-form on $M^n$ that is tangential at the boundary. Then
   	\begin{equation*}
   	\int_{M}|\nabla^{M}\omega|^2 + Ric^M(\omega,\omega)d\mu = -\int_{\partial M}A^{\partial M}(\omega^\sharp, \omega^\sharp)d\sigma.
   	\end{equation*}
   }
   \end{enumerate}
	\end{lemm}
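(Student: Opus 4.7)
The plan is to apply the standard Bochner/Weitzenb\"ock formula for one-forms in integrated form. On any Riemannian manifold with boundary, any harmonic one-form $\omega$ satisfies the pointwise identity $\nabla^{\ast}\nabla\omega + Ric^{M}(\omega) = \Delta_{H}\omega = 0$, because $\omega$ is closed and co-closed and so the Hodge Laplacian vanishes. Pairing with $\omega$, integrating over $M$, and using the standard integration by parts $\int_{M}\langle\nabla^{\ast}\nabla\omega,\omega\rangle\,d\mu = \int_{M}|\nabla\omega|^{2}\,d\mu - \int_{\partial M}\langle\nabla_{\nu}\omega,\omega\rangle\,d\sigma$, one arrives at the master identity
\[
\int_{M}\Bigl(|\nabla\omega|^{2} + Ric^{M}(\omega,\omega)\Bigr)\,d\mu = \int_{\partial M}\langle\nabla_{\nu}\omega,\omega\rangle\,d\sigma .
\]
All that remains is a pointwise computation of the integrand $\langle\nabla_{\nu}\omega,\omega\rangle$ on $\partial M$ under each of the two boundary conditions.

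For part (1), the condition $\nu\wedge\omega = 0$ on $\partial M$ says $\omega^{\sharp} = f\nu$ along $\partial M$ for $f = \omega(\nu)$. I will exploit the co-closed condition $d^{\ast}\omega = 0$, equivalent to $\mathrm{div}_{M}(\omega^{\sharp}) = 0$, together with the standard tangent-normal decomposition of the divergence at the boundary,
\[
\mathrm{div}_{M} X \Big|_{\partial M} = \mathrm{div}_{\partial M}(X^{T}) + (X\cdot\nu)\,H^{\partial M} + \langle\nabla_{\nu}X,\nu\rangle,
\]
applied to $X = \omega^{\sharp}$. Since $X^{T} = 0$ on $\partial M$ here, this yields $\langle\nabla_{\nu}\omega^{\sharp},\nu\rangle = -f H^{\partial M}$, and since also $\omega^{\sharp} = f\nu$ there, $\langle\nabla_{\nu}\omega,\omega\rangle = -f^{2}H^{\partial M} = -|\omega|^{2}H^{\partial M}$, exactly the identity claimed.

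For part (2), the condition $i_{\nu}\omega = 0$ on $\partial M$ means $\omega(\nu)\equiv 0$ there, so $\omega^{\sharp}$ is tangent to $\partial M$. Here I will use the closedness $d\omega = 0$, equivalent in index notation to $\nabla_{i}\omega_{j} = \nabla_{j}\omega_{i}$, to rewrite $\langle\nabla_{\nu}\omega,\omega\rangle = (\nabla_{\omega^{\sharp}}\omega)(\nu)$ on $\partial M$. Expanding via the Leibniz rule
\[
(\nabla_{\omega^{\sharp}}\omega)(\nu) = \omega^{\sharp}\bigl(\omega(\nu)\bigr) - \omega(\nabla_{\omega^{\sharp}}\nu),
\]
the first term vanishes because $\omega(\nu)\equiv 0$ on $\partial M$ while $\omega^{\sharp}$ is tangent to $\partial M$, and the second equals $-A^{\partial M}(\omega^{\sharp},\omega^{\sharp})$ by the definition of the second fundamental form with the convention $A^{\partial M}(X,Y) = \langle\nabla_{X}\nu,Y\rangle$ consistent with the rest of the paper. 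The only real point of care in the argument is the bookkeeping of sign conventions for $H^{\partial M}$ and $A^{\partial M}$, which are fixed once one selects the outward unit normal $\nu$; no analytic subtlety is involved.
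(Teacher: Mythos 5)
Your proof is correct and follows essentially the same route as the paper: both start from the integrated Bochner formula to reduce everything to evaluating $\langle\nabla_{\nu}\omega,\omega\rangle$ on $\partial M$, then for part (1) use $d^{\ast}\omega=0$ to trade the normal derivative for a tangential trace producing $H^{\partial M}$, and for part (2) use $d\omega=0$ to symmetrize $\nabla\omega$ and a Leibniz step to produce $A^{\partial M}$. The only cosmetic difference is that in part (1) you quote the standard tangential-normal decomposition of $\mathrm{div}_{M}$ at the boundary, whereas the paper carries out the equivalent computation directly in a local orthonormal frame $\{T_{1},\dots,T_{n-1},\nu\}$.
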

	\begin{proof} 
	Since $\omega$ is harmonic, the Bochner formula for one-forms gives
	\begin{equation*}
	0 = \langle (dd^{\ast}+d^{\ast} d)\omega,\omega\rangle = -\langle\Delta_{M}\omega,\omega\rangle + Ric^M(\omega,\omega).
	\end{equation*}
	\indent Integrating by parts, we have
	\begin{equation*}
	\int_{M}|\nabla^{M}\omega|^2 + Ric^M(\omega,\omega)d\mu = \int_{\partial M}g(\nabla^{M}_\nu\omega,\omega)d\sigma
	\end{equation*}
	where $\nu$ is the outward pointing unit conormal of $\partial M$. \\
	\indent In order to prove our first assertion (part (1)), let $\{T_{1},\ldots,T_{n-1}\}$ be a local orthonormal frame on $\partial M$, so that $\{T_{1},\ldots,T_{n-1},\nu\}$ is an orthonormal basis of the tangent space of $M^n$ at the points $p$ in $\partial M$ where the frame is defined. Since by assumption
	\begin{equation*}
	0 = d^{\ast}\omega = - div_{M}\omega = - \sum_{i=1}^{n-1}g(\nabla^{M}_{T_i}\omega,T_{i}) - g(\nabla^{M}_{\nu}\omega,\nu),
	\end{equation*}
	and $\omega = \lambda\nu$ on $\partial M$ for some smooth function $\lambda$ on $\partial M$, we have
	\begin{equation*}
	g(\nabla^{M}_{\nu}\omega,\omega) = \lambda g(\nabla^{M}_{\nu}\omega,\nu) = - \lambda\sum_{i=1}^{n-1}g(\nabla^{M}_{T_i}\omega,T_i) = - \lambda^2\sum_{i=1}^{n-1}g(\nabla^{M}_{T_i}\nu,T_i) = - H^{\partial M}|\omega|^2.
	\end{equation*}
	\indent Concerning part (2), we have
	\begin{equation*}
	g(\nabla^{M}_{\nu}\omega,\omega) = g(\nabla^M_{\nu} \omega^\sharp, \omega^\sharp) =(\nabla^{M}_{\nu}\omega)(\omega^{\sharp})= (\nabla^{M}_{\omega^{\sharp}}\omega)(\nu)=g(\nabla^{M}_{\omega^{\sharp}}\omega^{\sharp},\nu)
	\end{equation*}
	where the third equality relies on the fact that $d\omega=0$, hence assuming $i_{\nu}\omega=0$ on $\partial M$ we get
	\begin{equation*}
	g(\nabla^{M}_{\omega^{\sharp}}\omega^{\sharp},\nu)= -g(\omega^{\sharp},\nabla^{M}_{\omega^{\sharp}}\nu)=-A^{\partial M}(\omega^\sharp, \omega^\sharp)
	\end{equation*}
	so that in the end $g(\nabla^{M}_{\nu}\omega,\omega)=  -A^{\partial M}(\omega^\sharp, \omega^\sharp)$
	and the conclusion follows.
	\end{proof}

	\section{The test functions obtained from harmonic one-forms}\label{sec:testf}
	
	\indent Let $\Omega$ be a domain in $\mathbb{R}^{n+1}$ with smooth boundary $\partial \Omega$. Following the notations of \cite{AmbCarSha}, let us denote by $\{\theta_1,\ldots,\theta_{n+1}\}$ a fixed orthonormal basis of $\mathbb{R}^{n+1}$. Given a compact, free boundary minimal hypersurface $M^{n}$ in $\Omega$, we want to compute the index form on the functions $\langle N\wedge \omega^{\sharp},\theta_{i}\wedge\theta_{j}\rangle$ for $1\leq i < j\leq n+1$, where $N$ is the unit normal vector field along $M^n$ and $\omega\in \mathcal{H}^{1}_{N}(M,g)$ is a harmonic one-form on $M^n$ that is normal at $\partial M$.
	\begin{prop} \label{prop:omegaN}
		Let $\Omega$ be a domain in $\mathbb{R}^{n+1}$ whose boundary has mean curvature $H^{\partial \Omega}$ and second fundamental form  $II^{\partial \Omega}$ with respect to the outward normal $\nu$. Let $M^n$ be a compact, orientable free boundary minimal hypersurface in $\Omega$. 
		\begin{enumerate}
		\item{Given a harmonic one-form $\omega$ on $M^n$ that is normal at the boundary $\partial M$, let 
		\begin{equation*}
		u_{ij} = \langle N\wedge\omega^{\sharp},\theta_{i}\wedge\theta_{j}\rangle, \quad i,j = 1,\ldots, n+1, \, i < j, 
		\end{equation*}
		\noindent denote the coordinates of $N\wedge\omega^{\sharp}$ with respect to some orthonormal basis $\{\theta_{ij}\}_{i<j}$ of $\Lambda^2\mathbb{R}^{d}$. Then
		\begin{align} \label{eqomegaN}
		\nonumber \sum_{1\leq i<j \leq n+1}Q(u_{ij},u_{ij}) = - \int_{\partial M} H^{\partial \Omega}|\omega|^2d\sigma.
		\end{align}}
	\item{Given a harmonic one-form $\omega$ on $M^n$ that is tangential at the boundary $\partial M$ and using the same notations, then
		\begin{align} 
		\nonumber \sum_{1\leq i<j \leq n+1}Q(u_{ij},u_{ij}) = - \int_{\partial M} (II^{\partial \Omega} (N,N)|\omega|^2 + II^{\partial \Omega}(\omega^\sharp, \omega^\sharp) )d\sigma.
		\end{align}}
	\end{enumerate}
	\end{prop}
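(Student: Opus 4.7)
The plan is to view the scalar test functions $\{u_{ij}\}_{i<j}$ as the coefficients of the single bivector-valued field $V := N \wedge \omega^\sharp$ along $M$, with respect to the parallel orthonormal basis $\{\theta_i \wedge \theta_j\}_{i<j}$ of $\Lambda^2 \R^{n+1}$, and then reduce the sum $\sum_{i<j} Q(u_{ij}, u_{ij})$ to an intrinsic quantity on $M$ that is handled by the Bochner identity of Lemma \ref{lem:bochner}. Parseval's identity, together with $|u \wedge v|^2 = |u|^2 |v|^2 - \langle u, v\rangle^2$ and $N \perp \omega^\sharp$, gives at once
\[
\sum_{i<j} u_{ij}^2 = |V|^2 = |\omega|^2.
\]

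For the tangential gradient sum, parallelism of the $\theta_i$ in $\R^{n+1}$ shows that $e_\alpha(u_{ij}) = \langle D_{e_\alpha} V, \theta_i \wedge \theta_j\rangle$ for any local orthonormal tangent frame $\{e_\alpha\}$ on $M$, so Parseval yields $\sum_{i<j} |\nabla^M u_{ij}|^2 = \sum_\alpha |D_{e_\alpha} V|^2$. Expanding via the Weingarten relation ($D_{e_\alpha} N$ is the tangent vector with components $-A(e_\alpha, e_\beta)$) and the Gauss formula $D_X \omega^\sharp = \nabla^M_X \omega^\sharp + A(X, \omega^\sharp) N$, and using $N \wedge N = 0$, I obtain
\[
D_{e_\alpha} V = (D_{e_\alpha} N) \wedge \omega^\sharp + N \wedge \nabla^M_{e_\alpha} \omega^\sharp.
\]
The two summands lie in the mutually orthogonal subspaces $\Lambda^2 T_p M$ and $N \wedge T_p M$ of $\Lambda^2 \R^{n+1}$, so their squared norms add. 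Applying $|u \wedge v|^2 = |u|^2|v|^2 - \langle u, v\rangle^2$ to each term, summing over $\alpha$ and using the symmetry of $A$, I get
\[
\sum_{i<j} |\nabla^M u_{ij}|^2 = |A|^2 |\omega|^2 + |\nabla^M \omega|^2 - \sum_\alpha A(e_\alpha, \omega^\sharp)^2.
\]

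Substituting into the index form, and using $Ric^\Omega = 0$ in Euclidean ambient, the bulk $|A|^2|\omega|^2$ terms cancel and
\[
\sum_{i<j} Q(u_{ij}, u_{ij}) = \int_M \Bigl(|\nabla^M \omega|^2 - \sum_\alpha A(e_\alpha, \omega^\sharp)^2\Bigr) d\mu - \int_{\partial M} II^{\partial \Omega}(N, N)\, |\omega|^2\, d\sigma.
\]
The Gauss equation for a minimal hypersurface in $\R^{n+1}$ gives $Ric^M(\omega, \omega) = -\sum_\alpha A(e_\alpha, \omega^\sharp)^2$, so the bulk integrand is precisely $|\nabla^M \omega|^2 + Ric^M(\omega, \omega)$, and Lemma \ref{lem:bochner} converts it into a pure boundary contribution: case (1), with $\omega$ normal at $\partial M$, yields $-\int_{\partial M} H^{\partial M} |\omega|^2\, d\sigma$; case (2), with $\omega$ tangential at $\partial M$, yields $-\int_{\partial M} A^{\partial M}(\omega^\sharp, \omega^\sharp)\, d\sigma$.

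Finally, I combine the two boundary contributions using the free boundary condition, which makes $\nu$ simultaneously the outward conormal of $\partial M$ in $M$ and the outward unit normal of $\partial \Omega$, producing the orthogonal splitting $T_p \partial \Omega = T_p \partial M \oplus \R N$ at each $p \in \partial M$. For case (1), tracing $II^{\partial \Omega}$ in this splitting yields $H^{\partial \Omega} = H^{\partial M} + II^{\partial \Omega}(N, N)$ on $\partial M$, so the two boundary terms collapse into $-\int_{\partial M} H^{\partial \Omega} |\omega|^2\, d\sigma$. For case (2), since $\omega^\sharp \in T \partial M \subset T \partial \Omega$ and the same $\nu$ serves as conormal in both ambient manifolds, one has $A^{\partial M}(\omega^\sharp, \omega^\sharp) = II^{\partial \Omega}(\omega^\sharp, \omega^\sharp)$, producing the asserted integrand. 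The main bookkeeping hurdle is the $\Lambda^2$-orthogonal decomposition of $D_{e_\alpha} V$ into tangential and $N$-wedge-tangential pieces; everything else follows from Gauss/Weingarten and the free boundary identities for $\nu$.
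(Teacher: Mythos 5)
Your proof is correct and follows essentially the same route as the paper: compute $\sum_{i<j} Q(u_{ij},u_{ij})$ via Parseval and the Gauss--Weingarten decomposition to reach $\int_M |\nabla^M\omega|^2 + Ric^M(\omega,\omega)\,d\mu - \int_{\partial M} II^{\partial\Omega}(N,N)|\omega|^2\,d\sigma$, then invoke Lemma \ref{lem:bochner} and the free boundary identities $H^{\partial M} + II^{\partial\Omega}(N,N) = H^{\partial\Omega}$ and $A^{\partial M}(\omega^\sharp,\omega^\sharp) = II^{\partial\Omega}(\omega^\sharp,\omega^\sharp)$. The only difference is that you spell out the Parseval/orthogonal-decomposition step explicitly, whereas the paper defers that computation to the earlier reference \cite{AmbCarSha}.
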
	
	\begin{proof}
	\indent Since $\Omega$ is flat, the index form of $M^n$ is given by
	\begin{equation*}
	Q(\phi,\phi) = \int_{M} |\nabla^{M}\phi|^2 - |A|^2\phi^2d\mu - \int_{\partial M} II^{\partial \Omega}(N,N)\phi^{2}d\sigma.
	\end{equation*}	
	\indent Following the computations in \cite{AmbCarSha}, section 3, we have 
	\begin{equation*}
	\sum_{1\leq i < j \leq n+1}Q(u_{ij},u_{ij}) = \int_{M} |D(N\wedge\omega^{\sharp})|^2 - |A|^2|N\wedge\omega^{\sharp}|^2d\mu - \int_{\partial M} II^{\partial \Omega}(N,N)|N\wedge\omega^{\sharp}|^2d\sigma.
	\end{equation*}
	\indent Clearly, $|N\wedge\omega^{\sharp}|=|\omega|$. Moreover, since the ambient curvature is identically zero,
	\begin{equation*}
	|D(N\wedge\omega^{\sharp})|^2 = |\nabla^{M}\omega|^2 - |A(\omega^{\sharp},\cdot)|^2 + |A|^2|\omega|^2 
	\end{equation*}
	and the Gauss equation for the minimal hypersurface $M^n$ reads $Ric^{M}(\omega,\omega) = - |A(\omega^{\sharp},\cdot)|^2$. The resulting formula is
	\begin{equation}
	\label{eq:ident}
	\sum_{1\leq i < j \leq n+1}Q(u_{ij},u_{ij}) = \int_{M} |\nabla^{M}\omega|^2 + Ric^{M}(\omega,\omega)d\mu - \int_{\partial M} II^{\partial \Omega}(N,N)|\omega|^2d\sigma.
	\end{equation}
	\indent By part (1) of Lemma \ref{lem:bochner},
	\begin{equation*}
	\sum_{1\leq i < j \geq n+1}Q(u_{ij},u_{ij}) = - \int_{\partial M} \left(H^{\partial M} + II^{\partial \Omega}(N,N)\right)|\omega|^2 d\sigma.
	\end{equation*}
	\indent The free boundary assumption implies that, at each point $p$ in $\partial M$, if we denote by $\{T_{1},\ldots T_{n-1}\}$ an orthonormal basis of $T_p\partial M$ so that $\{T_1,\ldots,T_{n-1},N\}$ is an orthonormal basis of $T_p\partial \Omega$, then
	\begin{align*}
	H^{\partial M} + II^{\partial \Omega}(N,N) & = \sum_{i=1}^{n-1}\langle\nabla^{M}_{T_i}\nu,T_i\rangle+ II^{\partial \Omega}(N,N) = \sum_{i=1}^{n-1}\langle D_{T_i}\nu,T_i\rangle + II^{\partial \Omega}(N,N)\\ & = \sum_{i=1}^{n-1} II^{\partial \Omega}(T_i,T_i) + II^{\partial \Omega}(N,N)  = H^{\partial \Omega}.
	\end{align*}
	The first conclusion follows. Similarly, the proof of part (2) can be completed by combining equation \eqref{eq:ident} with Lemma \ref{lem:bochner}, part (2), and noting that $A^{\partial M}(\omega^\sharp, \omega^\sharp)=II^{\partial \Omega} (\omega^\sharp, \omega^\sharp)$ by virtue of the free boundary property of $M^n$.
	\end{proof}

	\section{Proof of the main results} \label{sec:mainproof}
	
	We are now ready to present the proof of our main results, starting with Theorem A.

	\begin{proof}
	Fix $\{\theta_1,\ldots,\theta_{n+1}\}$ an orthonormal basis of $\mathbb{R}^{n+1}$. Let us assume that $M^n$ has index $k$, and denote by $\{\phi_q\}_{q=1}^{\infty}$ an $L^2(M,d\mu)$ orthonormal basis of eigenfunctions of the Jacobi operator of $M^n$ satisfying the Robin boundary conditions $(*)$. Let $\Phi$ denote the linear map defined by
	\begin{equation*}
	\begin{matrix}
    \Phi : & \mathcal{H}^{1}_{T}(M,g) & \rightarrow  & \mathbb{R}^{n(n+1)k/2} \\
           &  \omega & \mapsto & \left[\int_{M} \langle N\wedge \omega^{\sharp}, \theta_{i}\wedge\theta_{j} \rangle\phi_{q} d\mu\right],
    \end{matrix}
    \end{equation*}
    \noindent where,  $1\leq i < j \leq n+1$ and $q$ varies from $1$ to $k$. Clearly,
    \begin{equation*}
    dim \mathcal{H}^{1}_{T}(M,g) \leq dim Ker(\Phi) + \frac{n(n+1)}{2}k
    \end{equation*}
    \indent Since $\mathcal{H}^{1}_{T}(M,g)\simeq H_{1}(M,\partial M;\mathbb{R})$ (as a consequence of Theorem \ref{thm:hdr}), the result will follow once we analyse the dimension of the kernel of the map $\Phi$.\\
    \indent Let $\omega$ be an element of the kernel of the map $\Phi$. This means that all functions $u_{ij}=\langle N\wedge \omega^{\sharp}, \theta_{i}\wedge\theta_{j} \rangle$ are orthogonal to the first $k$ eigenfunctions, namely $\phi_1,\ldots, \phi_k$. Since $index(M)=k$, we must have 
    \begin{equation*}
    Q(u_{ij},u_{ij})\geq \lambda_{k+1}\int_{M}u_{ij}^2d\mu \geq 0 \quad \text{for all} \quad 1\leq i<j \leq n+1,
    \end{equation*}
    by the variational characterization of the eigenvalues for problem $(*)$. In particular, by Proposition \ref{prop:omegaN}, we have
    \begin{equation*}
    0 \leq \sum_{1\leq i< j \leq n+1}Q(u_{ij},u_{ij}) = - \int_{\partial M}H^{\partial \Omega}|\omega|^2 d\sigma.
    \end{equation*}
    \indent Since $H^{\partial \Omega} > 0$, the above inequality happens only if $|\omega|$ vanishes identically on $\partial M$. But then $\omega = 0$ on $M$, by Lemma \ref{lem:maxprinc}. Hence, if the domain is strictly mean convex, $\Phi$ has trivial kernel, and the conclusion follows. \\
    \end{proof}

	We shall now comment on the proof of Theorem F for two-convex domains in $\mathbb{R}^{n+1}$. Under this slightly stronger condition we work with test functions derived from forms $\omega \in \mathcal{H}^1_{N}(M,g)$. The process of obtaining an index estimate on $M^n$, given part (2) of Proposition \ref{prop:omegaN}, is almost identical to the proof of Theorem A so we leave the details to the reader. The direct result is the following assertion:

		\begin{thm}	\label{thm:2conv}
			Let $\Omega^{n+1}$ be a strictly two-convex domain of the $(n+1)$-dimensional Euclidean space, $n\geq 2$. Let $M^n$ be a compact, orientable properly embedded free boundary minimal hypersurface of $\Omega$. Then
			\begin{equation*}
			index(M) \geq \frac{2}{n(n+1)} dim H_{n-1}(M,\partial M;\R).	\end{equation*}
		\end{thm}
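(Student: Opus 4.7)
The strategy is to mirror the proof of Theorem A verbatim, with two substitutions: we work with $\mathcal{H}^1_N(M,g)$ (harmonic one-forms tangential at $\partial M$) in place of $\mathcal{H}^1_T(M,g)$, and we invoke part (2) of Proposition \ref{prop:omegaN} in place of part (1). The chain of isomorphisms $\mathcal{H}^1_N(M,g) \simeq H^1(M;\R) \simeq H_{n-1}(M,\partial M;\R)$, provided by Theorem \ref{thm:hdr} together with Poincar\'e--Lefschetz duality, identifies $\dim \mathcal{H}^1_N(M,g)$ with $\dim H_{n-1}(M,\partial M;\R)$, so it suffices to bound the former in terms of $\mathrm{index}(M)$.

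Let $k = \mathrm{index}(M)$, let $\{\phi_q\}_{q=1}^\infty$ be an $L^2$-orthonormal basis of Jacobi eigenfunctions satisfying $(*)$, and define
\begin{equation*}
\Phi: \mathcal{H}^1_N(M,g) \longrightarrow \R^{n(n+1)k/2}, \qquad \omega \mapsto \left[\int_M \langle N\wedge \omega^\sharp, \theta_i\wedge\theta_j\rangle\, \phi_q\, d\mu\right]_{i<j,\, q\leq k}.
\end{equation*}
The estimate will follow by a dimension count provided $\Phi$ is injective. If $\omega \in \ker \Phi$, the functions $u_{ij} := \langle N\wedge \omega^\sharp,\theta_i\wedge\theta_j\rangle$ are $L^2$-orthogonal to the first $k$ eigenfunctions, so the variational characterization of the eigenvalues forces $Q(u_{ij},u_{ij}) \geq 0$ for every pair $i<j$. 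Summing and applying Proposition \ref{prop:omegaN}(2) yields
\begin{equation*}
0 \leq \sum_{i<j} Q(u_{ij},u_{ij}) = -\int_{\partial M} \left(II^{\partial\Omega}(N,N)|\omega|^2 + II^{\partial\Omega}(\omega^\sharp,\omega^\sharp)\right) d\sigma.
\end{equation*}

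\textbf{Main point.} The only genuinely new ingredient, compared to the proof of Theorem A, is the pointwise non-negativity of the boundary integrand on the right-hand side. At $p \in \partial M$, the free boundary condition places $N(p) \in T_p\partial\Omega$, while the condition $i_\nu\omega = 0$ forces $\omega^\sharp(p)$ to be orthogonal to $\nu$ and hence tangent to $\partial M \subset \partial\Omega$; furthermore $g(N,\omega^\sharp)=0$ since $\omega^\sharp$ is tangent to $M$. Thus, wherever $\omega(p) \neq 0$, the vectors $N(p)$ and $\omega^\sharp(p)/|\omega(p)|$ form an orthonormal pair in $T_p\partial\Omega$. Strict two-convexity says the sum of any two eigenvalues of $II^{\partial\Omega}$ is strictly positive, and by the min-max principle this gives, by compactness of $\partial\Omega$, a uniform constant $c>0$ with
\begin{equation*}
II^{\partial\Omega}(v_1,v_1) + II^{\partial\Omega}(v_2,v_2) \geq c
\end{equation*}
for every orthonormal pair $(v_1,v_2)$ in $T\partial\Omega$. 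Consequently the integrand above is bounded below by $c|\omega|^2$ pointwise on $\partial M$, which combined with the displayed inequality forces $\omega \equiv 0$ on $\partial M$. Lemma \ref{lem:maxprinc} then upgrades this to $\omega \equiv 0$ on $M$, showing that $\Phi$ is injective and completing the proof. The only subtlety is the pointwise orthogonality/tangency check that casts the boundary integrand as a two-plane trace of $II^{\partial\Omega}$; everything else is a formal transcription of the argument for Theorem A.
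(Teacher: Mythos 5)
Your proof is correct and is exactly the argument the paper has in mind: the paper explicitly says the proof of Theorem~\ref{thm:2conv} "is almost identical to the proof of Theorem A" once one switches to $\mathcal{H}^1_N(M,g)$ and invokes Proposition~\ref{prop:omegaN}(2), leaving the details to the reader. You have correctly supplied the one nontrivial new ingredient, namely that at each boundary point $N$ and $\omega^{\sharp}/|\omega|$ form an orthonormal pair in $T\partial\Omega$, so the Ky Fan min--max characterization of the sum of the two smallest eigenvalues of $II^{\partial\Omega}$ turns strict two-convexity into pointwise positivity of the boundary integrand, forcing $\omega\equiv 0$ on $\partial M$ and hence on $M$ by Lemma~\ref{lem:maxprinc}.
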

		
		Clearly, since any strictly two-convex domain is also strictly mean convex, we can take linear combinations of our estimates in Theorem A and Theorem \ref{thm:2conv} to obtain Theorem F.
		
		\
		
		The last part of this section is devoted to the proof of Theorem C. Let us remark that while for \textsl{strictly} mean convex domains the conclusion would follow at once by combining Theorem A with Lemma \ref{lem:twodimhom}, the study of the borderline case when $H^{\partial\Omega}\geq 0$ requires a more delicate, specific analysis.

	\begin{proof}
		Let us assume, by contradiction, that there exists a compact free boundary minimal surface $M^2$ in a mean convex domain $\Omega$ of $\mathbb{R}^3$ such that the opposite inequality holds:
		\begin{equation} \label{eq:ineqcontrad}
		index(M) < \frac{1}{3} (2g+r-1).
		\end{equation}
		\indent Adopting the same notation as in the proof of Theorem A, we consider the balancing maps $\Phi_N: \mathcal{H}^{1}_{N}(M,g) \rightarrow \mathbb{R}^{3k} $ and $\Phi_T: \mathcal{H}^{1}_{T}(M,g) \rightarrow \mathbb{R}^{3k}$ given by $\omega \mapsto \left[\int_{M} \langle N\wedge \omega^{\sharp}, \theta_{i}\wedge\theta_{j}\rangle\phi_{q} d\mu\right]$, where $k$ denotes the index of $M^2$. \\
		\indent Clearly,
		\begin{equation*}
		dim \mathcal{H}^{1}_{N}(M,g) \leq dim Ker(\Phi_N) + 3k \,\, \text{and} \,\, dim \mathcal{H}^{1}_{T}(M,g) \leq dim Ker(\Phi_T) + 3k. 
		\end{equation*}
		\indent Since $dim\mathcal{H}^{1}_{N}(M,g)=dim\mathcal{H}^{1}_{T}(M,g)=2g+r-1$ (see Lemma \ref{lem:twodimhom}) and we are assuming by contradiction that inequality \eqref{eq:ineqcontrad} holds, $Ker(\Phi_N)$ and $Ker(\Phi_T)$ must be both non-trivial. Let $\omega_1\neq 0$ and $\omega_2\neq 0$ be non-zero elements in $Ker(\Phi_N)$ and $Ker(\Phi_T)$, respectively. \\
		\indent As in the proof of Theorem A (invoking part (1) of Proposition \ref{prop:omegaN}), we have 
		\begin{equation*}
		0 \leq \lambda_{k+1} \int_{M}|\omega_1|^2d\mu \leq \sum_{1\leq i< j \leq 3}Q(u_{ij},u_{ij}) = - \int_{\partial M}H^{\partial M}|\omega_1|^2 d\sigma.
		\end{equation*}
		\indent Since $H^{\partial \Omega} \geq 0$ and $\omega_1\neq 0$ vanishes on a subset of $\partial M$ with no interior points (by virtue of Lemma \ref{lem:maxprinc}), the above inequalities can only happen when $H^{\partial \Omega}$ vanishes along $\partial M$, $\lambda_{k+1}=0$, and each function $u_{ij}=\langle N\wedge\omega^{\sharp},\theta_{i}\wedge\theta_{j}\rangle$ is an eigenfunction of the Jacobi operator associated to that eigenvalue, i.e., for all $i<j=1,\ldots,3$,
		\begin{align}
		\Delta_{M}\langle N\wedge\omega_1^{\sharp},\theta_{i}\wedge\theta_{j}\rangle + |A|^2\langle N\wedge\omega_1^{\sharp},\theta_{i}\wedge\theta_{j}\rangle = & 0 \quad \text{on} \quad M \label{eq:interior} \\
		\frac{\partial}{\partial \nu}\langle N\wedge\omega_1^{\sharp},\theta_{i}\wedge\theta_{j}\rangle - II^{\partial \Omega}(N,N)\langle N\wedge\omega_1^{\sharp},\theta_{i}\wedge\theta_{j}\rangle = & 0 \quad \text{on} \quad \partial M. \label{eq:boundary}
		\end{align} 
		\indent At this stage, a similar analysis can be carried out for $\omega_2\neq 0$ in $Ker(\Phi_T)$ and leads to conclude that any such one-form will necessarily satisfy equations \eqref{eq:interior} and \eqref{eq:boundary} above. \\
		
		\noindent \textit{Claim 1:} $M^2$ is totally geodesic. \\
		
		\indent Indeed, equation \eqref{eq:boundary} for $\omega_1$ and $\omega_2$ is equivalent to
		\begin{align*}
		0 & =  D_\nu N \wedge \omega_i^{\sharp} + N\wedge D_\nu \omega_i^{\sharp} - II^{\partial \Omega}(N,N)N\wedge \omega_i^{\sharp}  \\
		& =  D_\nu N \wedge \omega_i^{\sharp} + N\wedge (\nabla_{\nu} \omega_i^{\sharp} - II^{\partial \Omega}(N,N)\omega_i^{\sharp}). 
		\end{align*}
		\indent The first term is (dual to) a vector orthogonal to $M^2$, whereas the second is (dual to) a vector tangent to $M^2$. By linear independence, we conclude that, for $i=1,2$, 
		\begin{equation*}
		N\wedge(\nabla_{\nu} \omega_i^{\sharp} - II^{\partial \Omega}(N,N)\omega_i^{\sharp}) = 0 \quad \text{and} \quad D_\nu N \wedge \omega_i^{\sharp} = 0.
		\end{equation*}
		\indent The second equation means that, at boundary points where $\omega_1\neq 0$, $D_\nu N$ is a vector parallel to $\omega_1^{\sharp}$, or equivalently, tangent to $\partial M$ (because $\omega_1$ belongs to $\mathcal{H}^1_{N}(M,g)$). Similarly, at points where $\omega_2\neq 0$, $D_\nu N$ is a vector parallel to $\omega_2^{\sharp}$, or equivalently, orthogonal to $\partial M$ (because $\omega_2$ belongs to $\mathcal{H}^1_{T}(M,g)$). As a consequence, $D_\nu N = 0$ at points of $\partial M$ where both $\omega_1$ and $\omega_2$ do not vanish. Exploiting again Lemma \ref{lem:maxprinc}, we observe that the subset of $\partial M$ where either $\omega_1$ or $\omega_2$ vanishes contains no open set, because none of these one-forms vanishes identically. Therefore the complement of that set in $\partial M$ is dense and, by continuity, we conclude that $D_\nu N = 0$ everywhere on $\partial M$. \\
		\indent Thus, $\nu$ must be a principal direction at every point of $\partial M$ and the principal curvature associated to that direction is zero. Since $M^2$ is minimal, both principal curvatures actually vanish along $\partial M$. The claim now follows because any two-dimensional minimal surface in $\mathbb{R}^3$ is either totally geodesic or its second fundamental form vanishes on a discrete set of points. \\
		
		\noindent \textit{Claim 2:} $\omega_1^{\sharp}, \ \omega_{2}^{\sharp}$ are constant, non-zero vectors in $\mathbb{R}^3$. \\
		
		\indent This is a consequence of the previous claim and the equations \eqref{eq:interior}, which are satisfied by $\omega_1$ and $\omega_2$ as argued before. In fact, in view of Lemma 1 in \cite{Ros1}, $\omega_1$ and $\omega_2$ belong to the vector space
		\begin{equation*}
		\{\omega \in \Omega^{1}(M);\, \text{there exists a constant vector $T\in\mathbb{R}^3$ such that } \omega=\langle T,-\rangle\}.
		\end{equation*}
		Since $M^2$, being totally geodesic, is contained in a plane (necessarily orthogonal to the constant vector $N$), this space has dimension two and is in fact equal to the set of one-forms $\omega \in \Omega^{1}(M)$ that can be written as $\omega = \langle T,-\rangle$ for some $ T\in \mathbb{R}^3$ orthogonal to $N$. The claim follows. \\
		
		\indent Using the above claims, we conclude in particular that $\partial M$ is a collection of closed planar curves, each one tangent everywhere to the constant non-zero vector $\omega_{1}^{\sharp}$ in $\mathbb{R}^3$, which is a contradiction. Theorem C follows.
		
	\end{proof}

				\section{General ambient manifolds}\label{app:general}	
	
	In this section we state the most general result one can obtain by using the approach developed in \cite{AmbCarSha} combined with the computations presented in this paper (in particular: Lemma \ref{lem:bochner} and Proposition \ref{prop:omegaN}). Given $(\Omega^{n+1},g)$ a smooth, orientable Riemannian manifold with boundary and an isometric embedding thereof into some Euclidean space $\mathbb{R}^d$ (of possibly large dimension), and given a compact, properly embedded two-sided free boundary minimal hypersurface $M^n$ in $(\Omega^{n+1},g)$, we use as test functions for the index form the coordinates of $N\wedge \omega^{\sharp}$, $\omega$ in $\mathcal{H}^{1}_{T}(M,g)$, with respect to a fixed orthonormal basis of $\Lambda^{2}\mathbb{R}^d$. The index estimate will follow as soon as a pinching condition involving the intrinsic and extrinsic geometry of the manifold $(\Omega^{n+1},g)$ can be verified.
	\begin{thm}\label{thm:genamb}
		Let $(\Omega^{n+1},g)$ be a Riemannian manifold with boundary that is isometrically embedded in some Euclidean space $\mathbb{R}^d$. Let $M^{n}$ be a compact, orientable properly embedded free boundary minimal hypersurface of $(\Omega^{n+1},g)$. \\
		\indent Assume that for every non-zero vector field $X$ on $M^n$,
		\begin{multline*}
		\int_{M} \left[tr_M (Rm^{\Omega}(\cdot, X,\cdot,X)) + Ric^{\Omega}(N,N)|X|^2\right] d\mu 	+ \int_{\partial M} H^{\partial \Omega}|X|^2d\sigma \\
		> \int_{M} \left[(|II^{\Omega}(\cdot,X)|^2 -|II^{\Omega}(X,N)|^2) + (|II^{\Omega}(\cdot,N)|^2-|II^{\Omega}(N,N)^2|)|X|^2\right]d\mu 
		 \end{multline*}
		\noindent where $Rm^{\Omega}$ denotes the Riemann curvature tensor of $\Omega^{n+1}$, $II^{\Omega}$ denotes the second fundamental form of $\Omega^{n+1}$ in $\mathbb{R}^d$ and $N$ is a local unit normal vector field on $M^n$. \\
		\indent Then
		\begin{equation}
		index(M) \geq \frac{2}{d(d-1)}dim H_{1}(M,\partial M;\mathbb{R}).
		\end{equation} 
	\end{thm}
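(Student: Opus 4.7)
The plan is to follow the scheme of Theorem A, now carefully tracking the extrinsic contributions coming from the isometric embedding $\Omega \hookrightarrow \mathbb{R}^d$. As in the flat case, Theorem \ref{thm:hdr} identifies $\mathcal{H}^{1}_{T}(M,g) \simeq H_{1}(M,\partial M;\mathbb{R})$, so it suffices to show that the balancing map
\[
\Phi \colon \mathcal{H}^{1}_{T}(M,g) \longrightarrow \mathbb{R}^{d(d-1)k/2}, \qquad \omega \mapsto \left[\int_{M} \langle N\wedge \omega^{\sharp}, \theta_{i}\wedge\theta_{j}\rangle \phi_{q}\, d\mu\right]_{i<j,\, q\leq k}
\]
has trivial kernel, where $k=index(M)$, $\{\phi_{q}\}$ is an $L^{2}$-orthonormal basis of Robin eigenfunctions, and $\{\theta_{i}\}_{i=1}^{d}$ is an orthonormal basis of $\mathbb{R}^{d}$. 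For $\omega \in \ker \Phi$, the variational characterization of $\lambda_{k+1}$ forces $\sum_{i<j} Q(u_{ij},u_{ij}) \geq 0$ with $u_{ij} = \langle N\wedge \omega^{\sharp}, \theta_{i}\wedge\theta_{j}\rangle$, so everything reduces to showing that the pinching hypothesis makes this sum strictly negative whenever $\omega \neq 0$.

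The core calculation is the evaluation of $\sum_{i<j} Q(u_{ij},u_{ij})$. Using $\sum_{i<j} u_{ij}^{2} = |\omega|^{2}$ and the standard identity $\sum_{i<j}|\nabla^{M} u_{ij}|^{2} = \sum_{a}|D^{\mathbb{R}^{d}}_{e_{a}}(N\wedge \omega^{\sharp})|^{2}$ for any local orthonormal frame $\{e_{a}\}$ of $TM$, I would decompose the ambient derivative via $D^{\mathbb{R}^{d}} = D^{\Omega} + II^{\Omega}$ and, for fields tangent to $M$, $D^{\Omega} = \nabla^{M} + A(\cdot,\cdot)N$. This splits $D^{\mathbb{R}^{d}}_{e_{a}}(N\wedge \omega^{\sharp})$ into an $\Omega$-intrinsic piece valued in $\Lambda^{2}T\Omega$ and an extrinsic piece $E_{a} = II^{\Omega}(e_{a},N) \wedge \omega^{\sharp} + N \wedge II^{\Omega}(e_{a},\omega^{\sharp})$ valued in $T\Omega \wedge N\Omega$; by orthogonality the squared norm splits additively. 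The intrinsic contribution reproduces the flat-case formula $|\nabla^{M}\omega|^{2} + |A|^{2}|\omega|^{2} - |A(\omega^{\sharp},\cdot)|^{2}$, while the cross terms in $|E_{a}|^{2}$ all vanish because $N\Omega \perp T\Omega$, giving $|E_{a}|^{2} = |II^{\Omega}(e_{a},N)|^{2}|\omega|^{2} + |II^{\Omega}(e_{a},\omega^{\sharp})|^{2}$.

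From here one substitutes the Gauss equation for $M \subset \Omega$ in the form $-|A(\omega^{\sharp},\cdot)|^{2} = Ric^{M}(\omega,\omega) - tr_{M} Rm^{\Omega}(\cdot, \omega^{\sharp},\cdot, \omega^{\sharp})$, applies Lemma \ref{lem:bochner}(1) to convert $\int_{M}(|\nabla^{M}\omega|^{2} + Ric^{M}(\omega,\omega))\,d\mu$ into $-\int_{\partial M} H^{\partial M}|\omega|^{2}\, d\sigma$, and uses the free-boundary identity $H^{\partial M} + II^{\partial \Omega}(N,N) = H^{\partial \Omega}$ (intrinsic to $\Omega$ and so holding verbatim, exactly as in Proposition \ref{prop:omegaN}). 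Everything collapses to
\[
\sum_{i<j} Q(u_{ij},u_{ij}) = \int_{M}\Bigl[{\textstyle\sum_{a}}|E_{a}|^{2} - tr_{M} Rm^{\Omega}(\cdot,\omega^{\sharp},\cdot,\omega^{\sharp}) - Ric^{\Omega}(N,N)|\omega|^{2}\Bigr] d\mu - \int_{\partial M} H^{\partial \Omega}|\omega|^{2}\, d\sigma.
\]
Rewriting the $TM$-sums as $\sum_{a}|II^{\Omega}(e_{a}, X)|^{2} = |II^{\Omega}(\cdot, X)|^{2} - |II^{\Omega}(X, N)|^{2}$ for $X = N$ and $X = \omega^{\sharp}$, the right-hand side of the pinching hypothesis with $X = \omega^{\sharp}$ appears verbatim, so the hypothesis is exactly the statement that $\sum_{i<j} Q(u_{ij},u_{ij}) < 0$ whenever $\omega \neq 0$. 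Lemma \ref{lem:maxprinc} rules out $|\omega|$ vanishing on an open subset of $\partial M$, so the strict negativity holds throughout $\mathcal{H}^{1}_{T}(M,g) \setminus \{0\}$, forcing $\ker \Phi = \{0\}$ and yielding the claimed bound.

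The main obstacle is purely the bookkeeping of these curvature identities: correctly splitting $\Lambda^{2}\mathbb{R}^{d}$ according to whether each wedged vector lies in $T\Omega$ or $N\Omega$, verifying that no cross terms survive, and matching $TM$-traces with $T\Omega$-traces so that the resulting extrinsic quantities align exactly with the pinching hypothesis. Conceptually the argument is a straightforward generalization of the proof of Theorem A.
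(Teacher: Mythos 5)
Your proposal is correct and takes essentially the same approach the paper indicates: the paper deliberately does not write out the proof of Theorem \ref{thm:genamb}, stating only that it follows by feeding the test functions $\langle N\wedge\omega^{\sharp},\theta_i\wedge\theta_j\rangle$ for $\omega\in\mathcal{H}^1_T(M,g)$ into the index form, running the computations of \cite{AmbCarSha} adapted via Lemma \ref{lem:bochner} and Proposition \ref{prop:omegaN}, and invoking the pinching hypothesis, which is precisely what you reconstruct. The only small inefficiency is the appeal to Lemma \ref{lem:maxprinc} at the end: since the pinching hypothesis is assumed for \emph{every} non-zero vector field $X$, applying it directly with $X=\omega^\sharp$ already yields $\sum_{i<j}Q(u_{ij},u_{ij})<0$ for any $\omega\neq 0$, so no unique-continuation argument is needed here (unlike in the proof of Theorem A, where one only assumes $H^{\partial\Omega}>0$ and must promote vanishing on $\partial M$ to global vanishing).
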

	\indent The computations presented in \cite{AmbCarSha} show that the above pinching condition is verified in any strictly mean convex domain inside the ambient manifolds considered there. In particular, the index estimate above holds true for all compact properly embedded free boundary minimal hypersurfaces in strictly mean convex domains of the compact rank one symmetric spaces (round spheres and projective spaces over $\mathbb{R},\mathbb{C},\mathbb{H}$ and the Cayley plane endowed with their canonical metrics). Also, let us explicitly remark that our method allows to obtain an effective index estimate for free boundary minimal hypersurfaces in strictly mean convex domains of flat tori. \\
	
	\indent We can of course also consider the generalization of Theorem \ref{thm:2conv} (hence of Theorem F) to two-convex domains in Riemannian manifolds. The corresponding statement is as follows:
	
	\begin{thm}\label{thm:combgen}
		Let $(\Omega^{n+1},g)$ be a Riemannian manifold with boundary that is isometrically embedded in some Euclidean space $\mathbb{R}^d$. Let $M^{n}$ be a compact, orientable properly embedded free boundary minimal hypersurface of $(\Omega^{n+1},g)$. \\
		\indent Assume that for every non-zero vector field $X$ on $M^n$,
		\begin{multline*}
		\int_{M} \left[tr_M (Rm^{\Omega}(\cdot, X,\cdot,X)) + Ric^{\Omega}(N,N)|X|^2\right] d\mu + \int_{\partial M} II^{\partial \Omega}(N,N)|X|^2 + II^{\partial \Omega} (X,X)\, d\sigma \\
		> \int_{M} \left[(|II^{\Omega}(\cdot,X)|^2 -|II^{\Omega}(X,N)|^2) + (|II^{\Omega}(\cdot,N)|^2-|II^{\Omega}(N,N)^2|)|X|^2\right]d\mu 
		\end{multline*}
		\noindent where $Rm^{\Omega}$ denotes the Riemann curvature tensor of $\Omega^{n+1}$, $II^\Omega$ denotes the second fundamental form of $\Omega^{n+1}$ in $\mathbb{R}^d$, $II^{\partial \Omega}$ denotes the second fundamental form of $\partial \Omega$ in $\Omega$, and $N$ is a local unit normal vector field on $M^n$. \\
		\indent Then for any $\alpha \in [0,1]$
		\begin{equation}
		index(M) \geq \frac{2}{d(d-1)}\left(\alpha dim H_{1}(M,\partial M;\mathbb{R}) + (1-\alpha) dim H_{n-1} (M,\partial M ; \R)\right). 
		\end{equation} 
	\end{thm}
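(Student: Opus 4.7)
The strategy is to run two parallel balancing arguments, one for each of the harmonic form spaces $\mathcal{H}^1_T(M,g)$ and $\mathcal{H}^1_N(M,g)$, and then interpolate between the two resulting index estimates.

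Set $k=index(M)$, fix an $L^2(M,d\mu)$-orthonormal basis $\{\phi_q\}_{q\geq 1}$ of Jacobi eigenfunctions satisfying the Robin boundary condition $(*)$, and fix an orthonormal basis $\{\theta_{ij}\}_{i<j}$ of $\Lambda^2\R^d$. Define the two balancing maps
$$
\Phi_T\colon \mathcal{H}^1_T(M,g)\longrightarrow \R^{\frac{d(d-1)}{2}k},\qquad
\Phi_N\colon \mathcal{H}^1_N(M,g)\longrightarrow \R^{\frac{d(d-1)}{2}k},
$$
both by $\omega\mapsto \bigl(\int_M \langle N\wedge\omega^\sharp,\theta_{ij}\rangle\phi_q\,d\mu\bigr)_{i<j,\,1\leq q\leq k}$. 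Once we show that both maps are injective, the result follows at once: Theorem \ref{thm:hdr} together with Poincar\'e--Lefschetz duality give
$$
\dim H_1(M,\partial M;\R)\leq \tfrac{d(d-1)}{2}k \quad\text{and}\quad \dim H_{n-1}(M,\partial M;\R)\leq \tfrac{d(d-1)}{2}k,
$$
and the convex combination with weights $\alpha$ and $1-\alpha$ yields the stated inequality for every $\alpha\in[0,1]$.

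To prove injectivity, let $\omega$ lie in $\ker\Phi_T$ (respectively $\ker\Phi_N$); then each coordinate $u_{ij}=\langle N\wedge\omega^\sharp,\theta_{ij}\rangle$ is $L^2$-orthogonal to the first $k$ Jacobi eigenfunctions, and the min--max characterization of $\lambda_{k+1}\geq 0$ forces $\sum_{i<j}Q(u_{ij},u_{ij})\geq 0$. The computation of Proposition \ref{prop:omegaN}, extended to the Riemannian setting by tracking the extrinsic contributions coming from the isometric embedding $(\Omega,g)\hookrightarrow\R^d$ exactly as in \cite{AmbCarSha}, produces an identity of the form
$$
\sum_{i<j}Q(u_{ij},u_{ij})=\int_M\bigl[|\nabla^M\omega|^2+Ric^M(\omega,\omega)+\mathcal{E}(\omega)\bigr]d\mu - \int_{\partial M} II^{\partial\Omega}(N,N)|\omega|^2\,d\sigma,
$$
where $\mathcal{E}(\omega)$ packages the $Rm^\Omega$ and $II^\Omega$ bulk terms that vanish in the flat case and that the pinching hypothesis is tailored to dominate. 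Applying Lemma \ref{lem:bochner}(1) when $\omega\in\mathcal{H}^1_T$ (respectively Lemma \ref{lem:bochner}(2) when $\omega\in\mathcal{H}^1_N$, together with the free-boundary identity $A^{\partial M}(\omega^\sharp,\omega^\sharp)=II^{\partial\Omega}(\omega^\sharp,\omega^\sharp)$) rewrites the Dirichlet-plus-Ricci integral as $-\int_{\partial M}H^{\partial M}|\omega|^2 d\sigma$ (respectively $-\int_{\partial M}II^{\partial\Omega}(\omega^\sharp,\omega^\sharp)d\sigma$); combining this with the remaining $-\int II^{\partial\Omega}(N,N)|\omega|^2$ and using $H^{\partial M}+II^{\partial\Omega}(N,N)=H^{\partial\Omega}$ reduces $\sum_{i<j}Q(u_{ij},u_{ij})$ to the negative of the pinching integrand evaluated at $X=\omega^\sharp$. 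Under the strict pinching this quantity is negative whenever $\omega\not\equiv 0$, which together with Lemma \ref{lem:maxprinc} (forbidding $\omega$ from vanishing on an open subset of $\partial M$ unless it vanishes identically) contradicts $\sum_{i<j}Q(u_{ij},u_{ij})\geq 0$.

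The main obstacle is verifying that the single pinching hypothesis controls the boundary contributions arising in \emph{both} balancing arguments at once. In the $\mathcal{H}^1_N$ case the computation produces exactly $II^{\partial\Omega}(N,N)|\omega|^2+II^{\partial\Omega}(\omega^\sharp,\omega^\sharp)$ on $\partial M$, matching the hypothesis verbatim; in the $\mathcal{H}^1_T$ case it produces $H^{\partial\Omega}|\omega|^2$, which is dominated by the former since strict two-convexity (the natural regime of the pinching) forces strict mean convexity. With this observation both injectivity statements are established under the single hypothesis and the convex-combination step completes the proof.
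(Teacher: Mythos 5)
Your overall architecture --- two balancing maps $\Phi_T$ and $\Phi_N$, injectivity of each giving a bound on $\dim H_1(M,\partial M;\R)$ and $\dim H_{n-1}(M,\partial M;\R)$ respectively, then interpolation --- is exactly the route that the paper takes for the flat case (Theorem A, Theorem~\ref{thm:2conv}, and the one-line convex-combination remark that produces Theorem F). The paper gives no explicit proof of Theorem~\ref{thm:combgen}, so your filling-in of the two parallel arguments is the right thing to attempt.

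The difficulty is precisely where you flag it, and your resolution does not close it. The computation for $\omega\in\mathcal{H}^1_N(M,g)$ (Proposition~\ref{prop:omegaN}(2) in the Riemannian setting) yields $\sum_{i<j}Q(u_{ij},u_{ij})$ equal to the \emph{negative} of the pinching quantity of Theorem~\ref{thm:combgen} evaluated at $X=\omega^\sharp$ -- here $\omega^\sharp$ is tangent to $\partial M$ so $II^{\partial\Omega}(\omega^\sharp,\omega^\sharp)$ makes sense, and the hypothesis applies verbatim. But the computation for $\omega\in\mathcal{H}^1_T(M,g)$ (Proposition~\ref{prop:omegaN}(1) plus Lemma~\ref{lem:bochner}(1) plus the free-boundary identity $H^{\partial M}+II^{\partial\Omega}(N,N)=H^{\partial\Omega}$) produces the boundary integrand $H^{\partial\Omega}|\omega|^2$, which is the pinching condition of \emph{Theorem~\ref{thm:genamb}}, not the one you are assuming. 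Two obstructions prevent you from reusing the hypothesis of Theorem~\ref{thm:combgen}: first, when $\omega\in\mathcal{H}^1_T$ the field $\omega^\sharp$ is proportional to $\nu$ on $\partial M$, which is \emph{normal} to $\partial\Omega$, so the term $II^{\partial\Omega}(\omega^\sharp,\omega^\sharp)$ appearing in the stated hypothesis is not even defined for this $X$; second, and more substantively, the claim ``strict two-convexity forces strict mean convexity'' is a \emph{pointwise} implication, whereas the hypothesis here is an \emph{integral} pinching, and an integral inequality for $II^{\partial\Omega}(N,N)|X|^2+II^{\partial\Omega}(X,X)$ over the class of $X$ for which it is meaningful does not in general imply the corresponding integral inequality for $H^{\partial\Omega}|X|^2$ (the bulk $Rm^\Omega$ and $II^\Omega$ terms can prevent the localization argument from producing a pointwise consequence).

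To make the proof rigorous you should either (i) assume \emph{both} pinching conditions (that of Theorem~\ref{thm:genamb} and that of Theorem~\ref{thm:combgen}), noting this is what the flat case implicitly uses since pointwise two-convexity there implies pointwise mean convexity, or (ii) restrict to the regime the paper actually invokes --- strictly two-convex boundaries inside the ambient manifolds of \cite{AmbCarSha} --- where the bulk terms already have a favourable sign and the pinching hypothesis is equivalent to a \emph{pointwise} boundary condition, at which point the implication ``two-convex $\Rightarrow$ mean convex'' does legitimately apply and both injectivity statements follow. As written, your argument silently replaces the stated integral hypothesis with a pointwise one at the critical step, so the $\alpha>0$ endpoint of the conclusion is not established.
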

	Once again, the above theorem applies, as a special case, if $\Omega^{n+1}$ is a domain with strictly two-convex boundary inside any of the Riemannian manifolds considered in \cite{AmbCarSha}.   
	
	\

\bibliographystyle{amsbook}

\end{document}